\documentclass{amsart}
\usepackage{subfiles}
\usepackage{graphicx} % Required for inserting images
\usepackage{geometry}
\usepackage{mathtools}
\usepackage{amsthm,amssymb,amsmath}
\usepackage{ytableau}
\usepackage{tikz}
\usepackage{amsmath}
\usepackage{enumitem}
\usepackage{nicematrix}
\usepackage{empheq}
\usepackage[marginal,multiple]{footmisc}
\usepackage{tikz-cd}
\usepackage{booktabs}
\usepackage{soul}
\usepackage{comment}
\usepackage{hyperref}

\usepackage[most]{tcolorbox}

\usepackage{biblatex}
\addbibresource{bib.bib}

\newtheorem{proposition}{Proposition}[section]
\newtheorem{theorem}{Theorem}[section]
\newtheorem{lemma}{Lemma}[section]
\newtheorem{corollary}{Corollary}[section]

\theoremstyle{definition}
\newtheorem{definition}{Definition}[section]
\newtheorem{example}{Example}[section]
\newtheorem{remark}{Remark}[section]
\newtheorem{problem}{Problem}[section]

\newcommand{\bC}{\mathbb{C}}
\newcommand{\Fl}{\mathrm{Fl}}
\newcommand{\Gr}{\mathrm{Gr}}
\newcommand{\bP}{\mathbb{P}}
\newcommand{\bZ}{\mathbb{Z}}
\newcommand{\Poin}{\mathrm{P}}
\newcommand{\vspan}{\mathrm{span}}
\newcommand{\im}{\mathrm{im}}

\newcommand{\multibinom}[2]{
  \left[\genfrac{}{}{0pt}{}{#1}{#2}\right]
}

\newcommand{\SG}[1]{{\color{blue}#1--SG}}

\title{Irreducible components of two-column $\Delta$-Springer fibers}

\author{Joshua P.~Connor}
\address{UC Davis Library, University of California Davis\\ 100 NW Quad, Davis CA 95616, USA}
\email{jconnor@ucdavis.edu}

\author{Sean T.~Griffin}
\address{Faculty of Mathematics, University of Vienna, Oskar-Morgenstern-Platz 1, 1090 Vienna, Austria}
\email{sean.griffin@univie.ac.at}
\thanks{Sean T.~Griffin was supported by ERC grant “Refined invariants in combinatorics, low-dimensional topology and geometry of moduli spaces” No.~101001159}

\author{Kavish A.~Purohit}
\address{Department of Mathematics, University of California Davis\\ One Shields Ave, Davis CA 95616, USA}
\email{kavishpurohit20@gmail.com}

\date{\today}

\begin{document}

\begin{abstract}
     The $\Delta$-Springer fibers $Y_{n,\lambda,s}$, introduced by Levinson, Woo, and the second author, generalize Springer fibers for $\mathrm{GL}_n(\bC)$ and give a geometric interpretation of the of the Delta Conjecture from algebraic combinatorics (at $t=0$). We prove that all irreducible components of the $\Delta$-Springer fiber $Y_{n,n-1}=Y_{n,(1^{n-1}),n-1}$ are smooth. In fact, we prove that any intersection of irreducible components of $Y_{n,n-1}$ is a smooth Hessenberg variety which has the structure of an iterated Grassmannian fiber bundle. We then give a presentation of the singular cohomology ring of each irreducible component of $Y_{n,n-1}$ and a combinatorial formula for the Poincar\'e polynomial of an arbitrary union of intersections of irreducible components in terms of arm and leg statistics on Dyck paths.
\end{abstract}

\maketitle

\section{Introduction}

Springer fibers, originally defined by T.A.~Springer~\cite{Springer-WeylGrpReps,Springer-TrigSum}, are subvarieties of the complete flag variety $\mathrm{Fl}(1^n)$ that have important applications to the representation theory of Lie algebras and Weyl groups. Springer fibers for $\mathrm{GL}_n\mathbb{(C)}$ are indexed by partitions $\lambda$ of $n$, and denoted by $\mathcal{B}_\lambda$. Precisely, given a nilpotent $n\times n$ matrix $x$ whose Jordan form has block sizes recorded by the partition $\lambda = (\lambda_1,\lambda_2,\dots, \lambda_m)$ of $n$, 
\[
\mathcal{B}_\lambda = \{V_\bullet\in \mathrm{Fl}(1^n)\mid xV_i\subseteq V_i\text{ for all }i\}.
\]
One remarkable fact about these varieties is that their irreducible components are in bijection with standard Young tableaux of shape $\lambda$~\cite{Spaltenstein}.

Understanding the geometric properties of the irreducible components themselves has generated independent interest. So far, only partial results have been proven in this direction. For example, in the case of $\lambda$ two rows, it is known that all irreducible components of $\mathcal{B}_\lambda$ are smooth. In fact, in this case all irreducible components and intersections of irreducible components are iterated projective bundles~\cite{fung2002topology}. On the other hand, when $\lambda$ has two columns it is known that not all irreducible components are smooth, but combinatorial conditions are known for when they are smooth iterated Grassmannian bundles~\cite{fresse2009singular,fresse-melnikov}. 

More recently, Levinson, Woo, and the second author have defined the \textbf{$\Delta$-Springer fibers} which generalize Springer fibers~\cite{GLW}. Their motivation was to connect the Delta Conjecture from algebraic combinatorics~\cite{HRW} to the geometry of Springer fibers. Given two positive integers $k\leq n$, let $\mathrm{Fl}(1^n,(k-1)(n-k))$ be the partial flag variety whose successive quotients have dimensions $(1,1,\dots, 1, (n-k)(k-1))$, and let $x$ be a nilpotent matrix whose Jordan type is the partition $(n-k+1)^{k-1} = (n-k+1,n-k+1,\dots, n-k+1)$. Then the $\Delta$-Springer fiber $Y_{n,k}$ is defined as 
\begin{equation}\label{eq:YnkDef}
Y_{n,k} \coloneqq \{V_\bullet\in \mathrm{Fl}(1^n,(n-k)(k-1))\mid xV_i\subseteq V_i, \ker(x)\subseteq V_n\}.
\end{equation}
In fact, this definition extends to a class of varieties $Y_{n,\lambda,s}$ indexed by triples $n,\lambda,s$ where $\lambda$ is a partition with size $|\lambda|\leq n$ and $s$ is an integer greater than or equal to the length of $\lambda$, $s\geq \ell(\lambda)$. In the case when $\lambda$ is a single column $\lambda=(1^k)$ and $s=k$, they specialize to the $\Delta$-Springer fibers defined above, $Y_{n,k} = Y_{n,(1^k),k}$. Levinson, Woo, and the second author showed that the symmetric group $S_n$ acts on $H^*(Y_{n,\lambda,s})$ and gave a presentation for this ring. In the special case $Y_{n,k}$, the ring $H^*(Y_{n,k})$ is isomorphic to the \emph{generalized coinvariant ring} $R_{n,k}$ of Haglund, Rhoades, and Shimozono~\cite{HRS1} whose graded $S_n$-representation structure encodes the symmetric function in the Delta Conjecture (at $t=0$). Furthermore, Gillespie and the second author~\cite{GG} have realized $Y_{n,k}$ (and $Y_{n,\lambda,s}$ more generally) as an instance of a \emph{Borho--MacPherson variety} and used this to give another combinatorial formula for the graded $S_n$-representation $H^*(Y_{n,\lambda,s})$. As a corollary, they obtained a new Schur expansion for the Delta Conjecture symmetric function at $t=0$.

In \cite{Wilbert}, Lacabanne, Vaz, and Wilbert analyzed the irreducible components of $Y_{n,\lambda,s}$ in the case of two rows, meaning $Y_{n,\lambda,s}$ for $\ell(\lambda)\leq 2$. They proved that each irreducible component in this case is an iterated projective bundle. Furthermore, they proved that in the two-row case, each $\Delta$-Springer fiber is isomorphic to a particular subvariety of an \emph{exotic Springer fiber} of Kato~\cite{Kato}. They then gave a topological realization of $Y_{n,\lambda,s}$ in this case as a subspace of a product of 2-spheres and a skein-theoretic presentation of the cohomology ring $H^*(Y_{n,\lambda,s})$, which allowed them to realize the $S_n$ action topologically.

In this article, we analyze the irreducible components of the $\Delta$-Springer fibers when $\lambda = (1^{n-1})$ and $k=n-1$, meaning $Y_{n,n-1} = Y_{n,(1^{n-1}),n-1}$. We have the following theorem.
\begin{theorem}\label{thm:MainThmIntro}
     Each irreducible component of $Y_{n,n-1}$, and any nonempty intersection of irreducible components, is an iterated fiber bundle in which each fiber is a Grassmannian variety.
\end{theorem}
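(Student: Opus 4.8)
The plan is to follow the standard route for Springer fibers --- combinatorial parametrization of the components after Spaltenstein~\cite{Spaltenstein}, then explicit iterated-bundle structures as obtained by Fung~\cite{fung2002topology} and Fresse~\cite{fresse2009singular,fresse-melnikov} for two-row and two-column Springer fibers and by Lacabanne--Vaz--Wilbert~\cite{Wilbert} for two-row $\Delta$-Springer fibers --- building up the subspaces of the flag one at a time, in a combinatorially prescribed order, so that at every stage the new subspace ranges over a Grassmannian.

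The structural feature that makes $Y_{n,n-1}$ tractable is that, because $\lambda=(1^{n-1})$ and $k=n-1$, the nilpotent $x$ has all Jordan blocks of size at most two, so $x^2=0$. Hence $\im x\subseteq\ker x$, the only $x$-stable subspaces appearing in the analysis lie in the short chain $0\subseteq\im x\subseteq\ker x\subseteq W$ in the ambient space $W$, and $x$-stability of a flag $V_\bullet$ is equivalent to the ``local'' relations $xV_i\subseteq V_{i-1}$ at the unit steps. A useful first reduction is to strip off the forced incidence $\ker x\subseteq V_n$: the map $V_\bullet\mapsto V_n$ sends $Y_{n,n-1}$ to the Grassmannian of $n$-dimensional subspaces containing $\ker x$ (here a projective space), with fiber over $V_n$ the variety of full $x|_{V_n}$-stable flags in $V_n$; since $x|_{V_n}$ again has square zero, this exhibits $Y_{n,n-1}$ as a family of honest two-column Springer fibers over a projective space, which one must show is Zariski-locally trivial --- the subtlety being that $\im(x|_{V_n})=xV_n$ moves with $V_n$ while $\ker x$ stays fixed.

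Next I would recall from Levinson--Woo--Griffin~\cite{GLW} and Gillespie--Griffin~\cite{GG} the combinatorial indexing of the components of $Y_{n,\lambda,s}$ and make it explicit here, attaching to each label a list of incidence conditions --- prescribed values of $\dim(V_i\cap\ker x)$ and, where needed, of $\dim(V_i\cap\im x)$ --- which together with $x$-stability and the flag conditions cut out the corresponding component $K$, so that $Y_{n,n-1}=\bigcup K$ is the irreducible decomposition; this is the combinatorics encoded by Dyck paths with arm and leg statistics. For a fixed $K$ I would then build the flags of $K$ one subspace at a time, in the order dictated by its label, producing a tower
\[
K=Z_0\xrightarrow{\ \pi_1\ }Z_1\xrightarrow{\ \pi_2\ }\cdots\xrightarrow{\ \pi_r\ }Z_r=\mathrm{pt},
\]
where $\pi_j$ forgets one subspace $V_{i_j}$. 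Because $x^2=0$, the stability relations $xV_{i_j}\subseteq V_{i_j-1}$ and $xV_{i_j+1}\subseteq V_{i_j}$ confine $V_{i_j}$ between $A_j:=V_{i_j-1}+xV_{i_j+1}$ and $B_j:=V_{i_j+1}\cap x^{-1}(V_{i_j-1})$, and the crux is that in the prescribed order the remaining incidence condition on $V_{i_j}$ is always absorbed into these bounds --- it is either automatic or it enlarges $A_j$ by a subspace of $\ker x$ or $\im x$ already recorded in $Z_j$ --- so that the admissible locus for $V_{i_j}$ is a full Grassmannian $\Gr(\dim V_{i_j}-\dim A_j,\ \dim B_j-\dim A_j)$, with $A_j$ and $B_j/A_j$ the fibers of algebraic vector bundles over $Z_j$; then $\pi_j$ is a Zariski-locally-trivial Grassmannian bundle. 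I expect this absorption step --- choosing the building order so that every incidence constraint collapses cleanly --- together with the local-triviality bookkeeping over the moving base to be the main obstacle, and it is precisely where the two-column shape is used: for general two-column nilpotents these constraints do not collapse and some components are genuinely singular~\cite{fresse2009singular,fresse-melnikov}.

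Finally, for a nonempty intersection $K^{(1)}\cap\cdots\cap K^{(m)}$ I would impose the union of all the incidence conditions coming from the $K^{(a)}$; because $x^2=0$ and the $V_i$ are nested these collapse to a short list of constraints, the intersection is nonempty precisely when they are compatible (a combinatorial condition on the labels), and when it is nonempty the same step-by-step construction applies verbatim, now with some fibers reduced to the point $\Gr(0,0)$, realizing the intersection as an iterated Grassmannian bundle as claimed. The same local analysis shows that the combined conditions are equivalent to Hessenberg-type relations $xV_i\subseteq V_{h(i)}$, identifying each such intersection with a Hessenberg variety, which is smooth by the iterated bundle structure; paving the bundles by affine cells then gives the Poincar\'e polynomial of any union of intersections in terms of arm and leg statistics on Dyck paths.
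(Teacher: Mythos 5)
Your plan correctly isolates the useful structural facts ($x^2=0$, $\im x\subseteq\ker x$, stability being local), but it defers exactly the step that constitutes the proof. You say you would attach to each component label ``a list of incidence conditions --- prescribed values of $\dim(V_i\cap\ker x)$ and, where needed, of $\dim(V_i\cap\im x)$ --- which together with $x$-stability and the flag conditions cut out the corresponding component.'' That is the crux, and it is neither automatic nor of the form you propose. The components are \emph{defined} as closures of cells $C_T=X_w^\circ\cap Y_{n,n-1}$ in an affine paving; prescribing exact dimensions $\dim(V_j\cap\ker x)$ only describes an open dense locus, while relaxing them to incidence conditions with the fixed spaces $\ker x,\im x$ cuts out too much: inside $Y_{n,n-1}$ the condition $V_{i-1}\subseteq\im x$ alone is satisfied by permutation flags not in $K^i$ (take the two-letter row filled with $a,b$ where $i\le a<b$). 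The correct closed description, which the paper proves, is
\[
K^i=\{V_\bullet\mid V_{i-1}\subseteq\im(x)\subseteq V_n\subseteq x^{-1}V_{i-1}\},
\]
and the essential condition $xV_n\subseteq V_{i-1}$ couples two \emph{moving} members of the flag, so it is not expressible as dimension data against $\ker x$ and $\im x$. Establishing this equality requires real work: one shows the right-hand side $Z^i$ is closed and is an iterated Grassmannian bundle (hence irreducible and smooth) of dimension $\binom{n-1}{2}+(s-1)$, checks via the explicit Schubert-cell matrix coordinates that $C_T\subseteq Z^i$, and concludes $K^i=Z^i$ by the dimension count. You also explicitly leave open the two technical points you call the main obstacles (the ``absorption'' of constraints at each forgetful step, and Zariski-local triviality over a moving base), so as written the argument is a program rather than a proof.

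The gap propagates to the intersection statement. With the closed description in hand, intersections are immediate and very clean: $K^{b_1}\cap\cdots\cap K^{b_m}=K^{b_1,b_m}=\{V_\bullet\mid V_{b_m-1}\subseteq\im(x)\subseteq V_n\subseteq x^{-1}V_{b_1-1}\}$, which is again an iterated Grassmannian bundle by the same four-step construction (choose $V_{b_m-1}\subseteq\im x$, a flag inside it, then $V_n$ in a projective space, then the flag in between). Your version --- ``the combined incidence conditions collapse to a short list, and the same step-by-step construction applies verbatim'' --- cannot be carried out before the component-level description is proved, and the claim that only a compatibility condition on labels governs nonemptiness is asserted, not derived. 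In short: the missing ingredient is the identification of each cell closure with the explicit Hessenberg-type locus $Z^i$ (closedness, containment of the cell, irreducibility, and the dimension comparison); once that is supplied, the iterated-bundle structure you describe does follow essentially as you envision.
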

 Additionally, we give a Hilbert series formula for any union of intersections of components, as well as a quotient presentation for the cohomology of each irreducible component.

 \begin{theorem}
     The irreducible components of $Y_{n,n-1}$ are indexed by $K^2,\dots, K^n$. The integral singular cohomology ring of the $i^{th}$ component has the following presentation,
     \[
     H^*(K^i) \cong \mathbb{Z}[x_1,\dots, x_n]/I_n^i,
     \]
     where $I_n^i$ is the ideal generated by
     \begin{itemize}
         \item $e_2(x_1,\dots,x_n),\dots, e_n(x_1,\dots, x_n)$,
         \item $h_j(x_1,\dots, x_{i-1})$ for $j\geq n+1-i$
         \item $h_j(x_i,\dots, x_n)$ for $j\geq i-1$.
     \end{itemize}
 \end{theorem}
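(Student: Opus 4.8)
The plan is to identify $A_n^i \coloneqq \bZ[x_1,\dots,x_n]/I_n^i$ with $H^*(K^i)$ by pulling back the known presentation of $H^*(Y_{n,n-1})$ along the closed inclusion $\iota_i\colon K^i\hookrightarrow Y_{n,n-1}$ and then comparing Hilbert series. We use two inputs: the presentation \cite{GLW,HRS1} of $H^*(Y_{n,n-1})\cong R_{n,n-1}$ as the quotient of $\bZ[x_1,\dots,x_n]$ by $e_2,\dots,e_n$ and $x_1^{n-1},\dots,x_n^{n-1}$, in which $x_j=c_1(V_j/V_{j-1})$ for the tautological complete flag $V_1\subset\cdots\subset V_n$; and the explicit iterated Grassmannian bundle description of $K^i$ underlying Theorem~\ref{thm:MainThmIntro}.

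I would first show that $\iota_i^*$ is surjective and kills the two families of complete homogeneous polynomials among the generators of $I_n^i$. Surjectivity can be read off the bundle description: $K^i$ carries the complete tautological flag inherited from $Y_{n,n-1}$ and, being an iterated Grassmannian bundle built from that flag, has $H^*(K^i)$ generated as a ring by $\iota_i^*(x_1),\dots,\iota_i^*(x_n)$; in particular $H^*(K^i)$ is a quotient of $\bZ[x_1,\dots,x_n]$ in which $e_2,\dots,e_n$ and every $x_j^{n-1}$ vanish. The vanishing of the remaining generators is the geometric core of the argument and is where the conditions cutting out the component $K^i$ enter: on $K^i$ the bundle $V_{i-1}$ is forced to embed in a trivial rank-$(n-1)$ bundle, so the total Chern class of the rank-$(n-i)$ quotient is trivial above degree $n-i$, i.e.\ $h_m(\iota_i^*x_1,\dots,\iota_i^*x_{i-1})=0$ for $m\ge n+1-i$; symmetrically, the complementary rank-$(n-i+1)$ subquotient $V_n/V_{i-1}$ embeds in a trivial rank-$(n-1)$ bundle, giving $h_m(\iota_i^*x_i,\dots,\iota_i^*x_n)=0$ for $m\ge i-1$. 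Combining these, $\iota_i^*$ factors through a surjection of graded rings $\overline{\iota_i^*}\colon A_n^i\twoheadrightarrow H^*(K^i)$.

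It then remains to see that $\overline{\iota_i^*}$ is injective, which I would obtain from a graded-rank count. Since $K^i$ is an iterated Grassmannian bundle it has an affine paving, so $H^*(K^i)$ is free over $\bZ$ with Poincar\'e polynomial equal to the product of the Gaussian binomial coefficients prescribed by the bundle tower of Theorem~\ref{thm:MainThmIntro}. If $A_n^i$ is likewise free over $\bZ$ of exactly the same graded rank, then a degreewise surjection between free abelian groups of equal finite rank must be an isomorphism. Establishing this freeness-and-rank statement for $A_n^i$ is the main obstacle, since one must show that the ideal $I_n^i$ --- which combines the ``global'' symmetric relations $e_2,\dots,e_n$ with two ``local'' nested Grassmannian relation families --- is not too small. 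I expect to do this by producing a Gr\"obner basis of $I_n^i$ for a term order adapted to the splitting of variables $\{x_1,\dots,x_{i-1}\}\sqcup\{x_i,\dots,x_n\}$, whose standard monomials visibly form a $\bZ$-basis of $A_n^i$ and are enumerated by the same arm- and leg-statistics on Dyck paths that the paper uses to compute the Poincar\'e polynomial of $K^i$; matching the two counts completes the proof and, a posteriori, identifies $I_n^i$ with the full kernel of $\bZ[x_1,\dots,x_n]\twoheadrightarrow H^*(K^i)$. An alternative route that avoids the restriction map would realize $A_n^i$ directly as the cohomology ring of an explicit fibre product of two partial flag varieties of $\bC^{n-1}$ amalgamated along $e_2=\cdots=e_n=0$, with its Hilbert series computed via Leray--Hirsch on that model, and then identify this variety with $K^i$ using Theorem~\ref{thm:MainThmIntro}.
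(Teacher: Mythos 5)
Your overall skeleton---produce a surjection $\bZ[x_1,\dots,x_n]/I_n^i \twoheadrightarrow H^*(K^i)$ from Chern classes of the tautological flag, then show both sides are free $\bZ$-modules of the same rank---is exactly the paper's (Lemma~\ref{lem:CohMap} and Lemma~\ref{lem:Ranks}), but two of your steps do not go through as stated. First, the vanishing of $h_j(x_i,\dots,x_n)$ for $j\geq i-1$ is not obtained ``symmetrically'': the bundle $\widetilde{V}_n/\widetilde{V}_{i-1}$ does \emph{not} embed in a trivial rank-$(n-1)$ bundle. The natural rank-$(n-1)$ bundle containing it is $\widetilde{x^{-1}V}_{i-1}/\widetilde{V}_{i-1}$, which is in general non-trivial; in fact for $2\leq i\leq n-1$ the bundle $\widetilde{V}_n/\widetilde{V}_{i-1}$ has the sub-bundle $\ker(x)/\widetilde{V}_{i-1}$, the pullback of the tautological quotient bundle on $\Gr(i-1,n-1)$, whose dual has no nonzero global sections; hence $(\widetilde{V}_n/\widetilde{V}_{i-1})^*$ is not globally generated and $\widetilde{V}_n/\widetilde{V}_{i-1}$ is not a sub-bundle of \emph{any} trivial bundle. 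What is true, and what the paper uses, is the weaker statement that the total Chern class of $\widetilde{x^{-1}V}_{i-1}/\widetilde{V}_{i-1}$ equals $1$: this follows from the short exact sequence $0\to \ker(x)\to \widetilde{x^{-1}V}_{i-1}\xrightarrow{x} \widetilde{V}_{i-1}\to 0$ and the Whitney formula, and since $\widetilde{V}_n/\widetilde{V}_{i-1}$ sits inside it with quotient $\widetilde{x^{-1}V}_{i-1}/\widetilde{V}_n$ of rank $i-2$, one gets $s(\widetilde{V}_n/\widetilde{V}_{i-1})=c(\widetilde{x^{-1}V}_{i-1}/\widetilde{V}_n)$ and hence $h_j(x_i,\dots,x_n)=0$ for $j\geq i-1$. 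So your relation is correct, but its justification must be replaced by this SES argument; the two families of $h$-relations are genuinely asymmetric.

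Second, the freeness and rank of $\bZ[x_1,\dots,x_n]/I_n^i$, which you correctly flag as the main obstacle, is left as a plan (``produce a Gr\"obner basis \dots'') rather than an argument, so the proof is incomplete precisely at its crux. The paper closes this step by recognizing $I_n^i$ as Rhoades' ideal $I_{(i-1,n+1-i),n-1}$ and invoking his $\bZ$-basis theorem (\cite[Lemma 5.3]{RhoadesSubspace}), whose basis is indexed by the ordered set partitions in $\mathcal{OP}_{(i-1,n+1-i),n-1}$; these number $(i-1)(n-i+1)(n-1)!$, matching $\Poin(K^i;1)$ from Proposition~\ref{prop:Poincare-ij}. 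Note also that you do not need degreewise agreement of Hilbert series (nor the Dyck-path refinement): a surjection of free $\bZ$-modules of the same finite total rank is automatically an isomorphism, which is how the paper concludes.
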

 Furthermore, we generalize some of these statements to the setting of $Y_{n,(1^{n-1}),s}$ for any $s\geq n-1$. 
 See Theorem~\ref{thm:GrassBundle}, Proposition~\ref{prop:ComponentContain} and Theorem~\ref{thm:QuotientRing}.

The organization of the paper is as follows. In Section~\ref{sec:Background}, we give relevant background. In Section~\ref{sec:Irred Cmpts}, we give our main result, that arbitrary intersections of irreducible components are iterated Grassmannian bundles. 
In Section~\ref{sec:Poincare}, we give formulas for the Poincar\'e polynomials of unions of intersections of irreducible components. 
In Section~\ref{sec:SingularCohomology}, we give a quotient ring presentation for the singular cohomology ring of an irreducible component. 
In Section~\ref{sec:Future}, we offer potential directions of expansion on the topic and list some open problems.

\subsection{Acknowledgments} 
We thank Andrey Pravdic and Arik Wilbert for helpful discussions.

\section{Background}\label{sec:Background}

\subsection{Tableaux}

A \textbf{partition} of a positive integer $n$ is a weakly decreasing tuple of integers $\lambda = (\lambda_1\geq \lambda_2\geq\cdots \geq \lambda_\ell>0)$ such that $\sum_i\lambda_i = n$. When writing partitions, we will often abbreviate the partition $(a,\dots,a)$ with $b$ many $a$'s by $(a^b)$.

The \textbf{Young diagram} of $\lambda$, in English notation, is formed by drawing $\lambda_i$ boxes or \textit{cells} in the $i^{th}$ row from the top. Given $\mu$ a partition whose Young diagram is contained in that of $\lambda$, we can form the \textbf{skew shape} $\lambda/\mu$ by deleting the left-most $\mu_i$ boxes from row $i$ of the Young diagram of $\lambda$ for each $i$. A \textbf{standard Young tableau} of skew shape $\lambda/\mu$ is a labeling of its cells with integers $1,2,\dots, n$, where $n$ is the number of boxes, that increases down each column and left to right across each row. For example, a standard Young tableau on the skew shape $(3,2,2,1)/(2,1)$ is 
\[
\begin{ytableau}
    \none & \none & 4 \\
    \none & 3\\
    1 &5\\
    2\\
\end{ytableau}.
\]
We denote by $\mathrm{SYT}(\lambda/\mu)$ the set of standard Young tableaux on $\lambda/\mu$.

\subsection{Flag varieties}

    Given a positive integer $n$ and a composition $\alpha = (\alpha_1,\dots, \alpha_\ell)$ of $m$, a \textbf{partial flag of type $\alpha$} is a chain of subspaces of $\bC^n$. A \textbf{flag} of $\mathbb{C}^n$ is a chain of subspaces $\{0\}=V_0 \subseteq V_1\subseteq V_2\subseteq \cdots \subseteq V_\ell = \bC^n$ such that $\dim(V_i/V_{i-1})=\alpha_i$ for all $i\geq 1$. 
    The associated \textbf{partial flag variety}, denoted by $\mathrm{Fl}(\alpha)$, is the set of all partial flags of type $\alpha$. In the case when $\alpha = (1,1,\dots, 1)=(1^n)$, we say $\Fl(1^n)$ is the \textbf{complete flag variety}. When $\alpha$ has two parts, $\alpha = (k,n-k)$, then the partial flag variety can be identified with the Grassmannian of $k$-dimensional subspaces of $\bC^n$, $\Fl(k,n-k)\cong \Gr(k,n)$.
    
    Given a permutation $w$, the \textbf{permutation flag} is the flag whose $i^{th}$ subspace is the span of the basis vectors corresponding to the first $i$ letters of the permutation $w$,
    \[
    V_i = \mathrm{span}\{e_{w_1},\dots, e_{w_i}\}.
    \]
    
    \bigbreak\noindent Given a nonzero vector $v\in \mathbb{C}^n$, it can be written uniquely in the standard basis $\{e_i\}$ as
    \[
    v = e_i + \sum_{j < i} a_j e_j
    \]
    for some $i$ and some $a_j\in \mathbb{C}$. Given a complete flag $V_\bullet\in \mathrm{Fl}(1^n)$, there is a unique permutation $w$ in the symmetric group $S_n$ and scalars $a_{i,j}$ such that if $V_i = \mathrm{span}\{v_1,\dots, v_i\}$ for all $i$,
    \begin{equation}\label{eq:spanSchubform}
    v_i = e_{w_i} + \sum_{j< w_i} a_{i,j} e_j,
    \end{equation}
    where $a_{i,j}=0$ unless $w^{-1}(j) > i$. For a fixed $w\in S_n$, the \textbf{Schubert cell} $X_w^\circ$ is the set of all flags $V_\bullet$ whose spanning vectors have the form \eqref{eq:spanSchubform}. Equivalently, we may put each flag in matrix coordinate form by recording the $i^{th}$ spanning vector $v_i$ as the $i^{th}$ column of a matrix.

    Let $n\leq K$ be positive integers. In this paper, we primarily focus on partial flags in the vector space $\mathbb{C}^K$ of type $(1,1,\dots,1, K-n) = (1^n, K-n)$. The Schubert cells for these partial flags are defined by partial permutations of the form $w = w_1w_2\dots w_n$ where $w_1,\dots, w_n$ are distinct elements of $\{1,\dots, K\}$. We may similarly find representative vectors $v_i$ of the parts of the flag $V_1,\dots, V_n$ as in \eqref{eq:spanSchubform}. The matrix coordinates of $V_\bullet$ are thus given by a $K\times n$ rectangular matrix with $v_i$ in the $i^{th}$ column. 

    \begin{example}
    for $n=4$, $K=6$ and $w=3614$, elements of $X_w^\circ$ are flags with spanning vectors of the form
    \begin{align*}
        v_1 &= e_3 + a_{1,1}e_1 + a_{1,2}e_2\\
        v_2 &= e_6 + a_{2,1}e_1 + a_{2,2}e_2 + a_{2,4}e_4 + a_{2,5}e_5\\
        v_3 &= e_1\\
        v_4 &= e_4
    \end{align*}
   and whose matrix coordinates are
    $$
    \begin{bmatrix}
        a_{1,1} & a_{2,1} & 1 & 0 \\
        a_{1,2} & a_{2,2} & 0 & 0 \\
        1 & 0 & 0 & 0 \\
        0 & a_{2,4} & 0 & 1 \\
        0 & a_{2,5} & 0 & 0 \\
        0 & 1 & 0 & 0
    \end{bmatrix}.
    $$
    \end{example}
    
\subsection{Springer Fibers}

Given a nilpotent $n\times n$ matrix $x$, the \textbf{Springer fiber} associated to $x$ is the set of all complete flags that are preserved by $x$,
\[
\mathcal{B}_x \coloneqq \{V_\bullet \in \mathrm{Fl}(1^n) \mid xV_i \subseteq V_i\text{ for all }i\}.
\]
Springer fibers have importance in geometric representation theory. Namely, T.A.~Springer used them to geometrically construct the irreducible representations of the symmetric group $S_n$~\cite{Springer-WeylGrpReps,Springer-TrigSum}.

Let $\lambda$ be the Jordan type of $x$, which is the partition recording the sizes of the Jordan blocks of the Jordan canonical form of $x$, then Springer fibers are alternatively indexed by the partition $\lambda$, $\mathcal{B}_\lambda\coloneqq \mathcal{B}_x$. By work of Spaltenstein~\cite{Spaltenstein}, the irreducible components of $\mathcal{B}_\lambda$ are in bijection with standard Young tableaux $\mathrm{SYT}(\lambda)$. Since $\mathcal{B}_x$ is connected and typically has multiple irreducible components, then $\mathcal{B}_\lambda$ is typically singular. 

Only partial progress has been made to characterize the smoothness of individual irreducible components. For example, Fung~\cite{fung2002topology} has proven that when $x$ has at most two Jordan blocks, or equivalently when the Jordan type $\lambda$ has at most 2 rows, then all irreducible components of $\mathcal{B}_x$ are smooth, and all intersections of components are smooth. Fresse has characterized the irreducible components of $\mathcal{B}_x$ that are smooth in the case when $\mu$ has at most 2 columns. Fresse and Melnikov~\cite{fresse-melnikov} then characterized the $\lambda$ for which all irreducible components of $\mathcal{B}_x$ are smooth.

\subsection{$\Delta$-Springer fibers}\label{subsec:Delta-Springer}

The \textbf{$\Delta$-Springer fibers} are a generalization of the type A Springer fibers introduced by the second author, Levinson, and Woo~\cite{GLW}. Given integers $k\leq n$, a partition $\lambda$ of $k$, and an integer $s\geq \ell(\lambda)$, let $x$ be a nilpotent matrix with Jordan type $(n-k,n-k,\dots, n-k)+ \lambda$ (where there are $s$ many $n-k$ and addition is component-wise). The \textbf{$\Delta$-Springer fiber} is
\[
Y_{n,\lambda,s} \coloneqq \{V_\bullet\in \Fl(1^n,(n-k)(s-1))\mid \mathrm{im}(x^{n-k}) \subseteq V_n, \, xV_i\subseteq V_i\}.
\]
Observe that when $n=k=s$, then $\lambda$ is a partition of $n$, and $Y_{n,\lambda,s}$ is the usual Springer fiber of complete flags associated with $\lambda$.

The original motivation behind the definition was to find a geometric realization of the Delta Conjecture. In forthcoming work, Gillespie, Gorsky and the second author realize this goal by connecting $\Delta$-Springer fibers to the combinatorics of the Delta Conjecture and Rational Shuffle Theorems~\cite{GGG2}.

In this article, we focus on a particular case of $\Delta$-Springer fibers: When $k=n-1$, and  $\lambda = (1^{n-1})$ a single column, then $x$ is a nilpotent of type $(2^{n-1},1^{s-n+1})$. In this case, $\mathrm{im}(x^{n-k}) = \im(x)$, and hence
\[
Y_{n,(1^{n-1}),s} = \{V_\bullet\in \Fl(1^n,s-1)\mid \im(x)\subseteq V_n,\, xV_i\subseteq V_i\}.
\]
Specializing even further, when $s=n-1$ then $\im(x)=\ker(x)$ and we can alternatively write
\[
Y_{n,n-1}=Y_{n,(1^{n-1}),n-1} = \{V_\bullet\in \Fl(1^n,n-2)\mid \ker(x)\subseteq V_n,\, xV_i\subseteq V_i\}.
\]

Throughout the article, we allow $s\geq n-1$ arbitrary and use the following specified nilpotent operator $x$. Let $x$ act on the basis elements $e_1,\dots, e_{n-1+s}$ of $\bC^{n-1+s}$ by the rule:
\[
xe_{i} = \begin{cases} 
e_{i-(n-1)} & \text{if }n\leq i \leq 2n-2,\\
0 & \text{otherwise.}
\end{cases}
\]

An affine paving of $Y_{n,\lambda,s}$ has been given in \cite{GLW} (see also \cite{griffin-HL}), and a description of the irreducible components as cell closures has been given in~\cite{GLW}. 

\begin{definition}\label{def:Cells}
    Given a standard Young tableau $T$ of skew shape $(2,1^{n-2})/(1^{s-n+1})$, let $i$ be the entry in the top-right cell. Define 
    \[
    C_T \coloneqq X^\circ_w \cap Y_{n,k}
    \]
    where $w = [n-1,n-2,\dots,n-i+1,n+s-1,n-i,n-i-1,\dots, 1]$.
\end{definition}

    % \KP{add the syt to perm example for definition 2.1}
    % $\ytableaushort{{}{},{},{}}$

\begin{theorem}[\cite{GLW}]\label{thm:YDim}
    The subspace $C_T$ is a cell of $Y_{n,(1^{n-1}),s}$ in an affine paving which has maximal dimension $\dim(Y_{n,(1^{n-1}),s}) = \binom{n-1}{2}+(s-1)$. Thus, $K^T = \overline{C_T}$ is an irreducible component of $Y_{n,k}$. In fact, all irreducible components are of this form.
\end{theorem}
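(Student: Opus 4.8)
The plan is to prove the statement in three stages — produce an affine paving of $Y_{n,(1^{n-1}),s}$ by intersecting with Schubert cells, identify the maximal-dimensional cells by an explicit count, and then invoke the standard dictionary between top-dimensional cells of an affine paving and irreducible components.

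\textbf{Stage 1: the affine paving.} Every $x$-stable partial flag $V_\bullet\in\Fl(1^n,s-1)$ of $\bC^{n-1+s}$ lies in a unique Schubert cell $X_w^\circ$ for a length-$n$ partial permutation $w=w_1w_2\cdots w_n$ of $\{1,\dots,n-1+s\}$; I would write its matrix coordinates as in \eqref{eq:spanSchubform}, with representatives $v_i=e_{w_i}+\sum_{j<w_i}a_{i,j}e_j$ and $a_{i,j}=0$ unless $w^{-1}(j)>i$. Imposing the defining conditions of $Y_{n,(1^{n-1}),s}$ — namely $xV_i\subseteq V_i$ for all $i$ and $\im(x)\subseteq V_n$ — turns into a system of equations in the $a_{i,j}$. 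The decisive simplification is that the chosen $x$ has all Jordan blocks of size $\le 2$ (it is the explicit operator $xe_i=e_{i-(n-1)}$ for $n\le i\le 2n-2$, and $0$ otherwise), so $x$ sends each $v_i$ into $\im(x)=\vspan\{e_1,\dots,e_{n-1}\}$; one checks that each equation either holds identically, forces some $a_{i,j}=0$, or solves for some $a_{i,j}$ polynomially in the remaining free coordinates. Hence $X_w^\circ\cap Y_{n,(1^{n-1}),s}$ is empty or an affine space, and since the Schubert cells of $\Fl(1^n,s-1)$ already form an affine paving compatible with closures, this exhibits an affine paving of $Y_{n,(1^{n-1}),s}$. (This is the specialization of the paving of \cite{GLW}, which one may alternatively cite directly.)

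\textbf{Stage 2: the maximal cells.} For the specific partial permutation
\[
w=[\,n-1,\ n-2,\ \dots,\ n-i+1,\ n+s-1,\ n-i,\ n-i-1,\ \dots,\ 1\,]
\]
attached to a tableau $T$ with top-right entry $i$ as in Definition~\ref{def:Cells}, I would compute $\dim C_T=\dim\big(X_w^\circ\cap Y_{n,(1^{n-1}),s}\big)$ by counting the free parameters $a_{i,j}$ that survive the equations of Stage 1 — equivalently, the inversions of $w$ not eliminated by the $x$-stability and kernel conditions — and verify that this number equals $\binom{n-1}{2}+(s-1)$ for every admissible $i$. Separately one checks $\dim Y_{n,(1^{n-1}),s}=\binom{n-1}{2}+(s-1)$, either as the maximum cell dimension over the whole paving, from the general dimension formula for $Y_{n,\lambda,s}$ in \cite{GLW}, or by comparison with Springer fibers of known dimension. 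Thus the $C_T$ are a family of top-dimensional cells, all of the same dimension.

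\textbf{Stage 3: from cells to components.} In any affine paving $X=\bigsqcup_\alpha C_\alpha$ with each $\overline{C_\alpha}$ a union of cells, the closure relation strictly drops dimension, so the irreducible components of $X$ are exactly the closures of the cells not contained in the closure of any other cell; in particular every top-dimensional cell closure is an irreducible component. Applying this to $Y_{n,(1^{n-1}),s}$ shows each $K^T=\overline{C_T}$ is an irreducible component. For the converse, I would use that $Y_{n,(1^{n-1}),s}$ is connected and pure-dimensional (e.g.\ via its realization as a Borho--MacPherson variety in \cite{GG}, or directly from \cite{GLW}): then every component has dimension $\binom{n-1}{2}+(s-1)$, hence is the closure of a top-dimensional cell, and it remains to see the only such cells are the $C_T$ — which follows from the bookkeeping of Stage 2 applied to the whole paving, or by matching the count of top-dimensional cells against the number of tableaux $T$.

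\textbf{Main obstacle.} The crux is the dimension bookkeeping of Stage 2: verifying that the partial permutations $w$ of Definition~\ref{def:Cells} all yield cells of the uniform maximal dimension $\binom{n-1}{2}+(s-1)$ and that no cell of the paving exceeds it. This is a finite but delicate computation in the combinatorics of partial permutations, requiring one to track precisely which coordinates $a_{i,j}$ stay free after imposing $xV_i\subseteq V_i$ and $\im(x)\subseteq V_n$. A secondary point, needed only for the ``all components are of this form'' clause, is equidimensionality of $Y_{n,(1^{n-1}),s}$; rather than reprove it I would import it from \cite{GLW} or \cite{GG}.
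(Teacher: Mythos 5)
The paper does not prove this statement itself; it imports it from \cite{GLW}, and your outline reconstructs essentially that argument: an affine paving of $Y_{n,(1^{n-1}),s}$ by intersecting with Schubert cells in coordinates adapted to the chosen nilpotent $x$, a parameter count showing the cells $C_T$ attain the maximal dimension $\binom{n-1}{2}+(s-1)$, and the standard dictionary by which closures of top-dimensional cells of a paving of an equidimensional variety are exactly the irreducible components. The points you leave as ``one checks'' (affineness of each Schubert-cell intersection, the dimension bookkeeping, and equidimensionality) are precisely the content of \cite{GLW}, which you correctly flag and propose to cite, so your sketch is sound and consistent with how the paper treats the result.
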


\begin{example}\label{ex:IrredCmpts}
For example, the following are all standard Young tableau of shape $(2,1,1)/\emptyset=(2,1,1)$ that index the irreducible components of $Y_{4,3}=Y_{4,(1^3),3}$.
\begin{center}
$\ytableaushort{1 4,2,3}$ \;
%$\ytableaushort{1 4,3,2}$ \;
$\ytableaushort{1 3,2,4}$ \;
%$\ytableaushort{1 3,4,2}$ \;
$\ytableaushort{1 2,3,4}.$ \;
%$\ytableaushort{1 2,4,3}$
\end{center}
\end{example}
\begin{example}\label{ex:IrredCmpt2}
The following are all standard Young tableau of shape $(2,1,1)/(1)$ that index the irreducible components of $Y_{3,(1^2),3}$.
\begin{center}
\ytableausetup{notabloids}
$\begin{ytableau} \none & 3\\ 2 \\ 1\end{ytableau}$\;
$\begin{ytableau} \none & 2\\ 3 \\ 1\end{ytableau}$\;
$\begin{ytableau} \none & 1\\ 3 \\ 2\end{ytableau}$.
\end{center}
\end{example}

Note that when $s=n-1$, as in Example \ref{ex:IrredCmpts}, the tableaux are straight shape $(2,1,\dots, 1)$ and there are $n-1$ of them. When $s>n-1$ as in Example \ref{ex:IrredCmpt2}, the tableaux are skew shapes consisting of a single column $(1^{n-1})$ plus a single box, and there are $n$ of them.

\begin{figure}
    \centering
    \ytableausetup{mathmode, boxframe=normal, boxsize=3em}
    \begin{ytableau}
        \none & e_{n-1+s}\\
        \none & \vdots\\
        \none & e_{n+s}\\
        e_{n-1} & e_{2n-2} \\ 
        \vdots & \vdots \\ 
        e_2 & e_{n+1} \\  
        e_{1} &  e_{n}
    \end{ytableau}
    \caption{An illustration of the nilpotent $x$ which sends the vector $e_i$ to the vector in the cell to its left if it exists, or otherwise to $0$.}
    \label{fig:NilpotentPic}
\end{figure}

\begin{remark}\label{rmk:PermutationAlgo}
    The motivation for the definition of the permutation $w$ in Definition~\ref{def:Cells} is as follows: The nilpotent operator $x$ can alternatively be described by the diagram in Figure~\ref{fig:NilpotentPic} 
    in which each vector $e_i$ is mapped under $x$ to its vector immediately to its left, or if $e_i$ has no cell to its left then it's mapped to $0$.

    Given a standard Young tableau $T$ of shape $(2,1^{n-2})/(1^{s-n+1})$, the partial permutation $w$ is defined such that $e_{w_j}$ is the vector corresponding to the cell labeled by $j$ in $T$. In Example~\ref{ex:IrredCmpts}, the three irreducible components correspond, respectively, to the partial permutations $3216$, $3261$, and $3621$.

    Note that our tableau are vertically flipped from the ones in \cite{GLW}, since we have flipped the diagram describing the nilpotent matrix $x$ in order for the irreducible components to be described by tableaux increasing down each column.
    \begin{comment}
    \begin{center}
        $T =
        \ytableausetup{centertableaux,mathmode, boxsize=normal}
        \begin{ytableau}
        1 &  i \\ 2 & \none \\ \vdots & \none \\ n & \none
        \end{ytableau} 
        \quad \rightarrow$
        \quad
        $\hat{T} = 
        \begin{ytableau}
        n & \none \\ \vdots & \none \\ 2 & \none \\  1 &  i
        \end{ytableau}$
    \end{center}
    We take the permutation formed by ordering the index of the nilpotent operator's vectors by the labels in $\Hat{T}$ (1 corresponds to $k$, 2 to $k-1$, and so on). This gives the permutation $w$ as above.
    \end{comment}
\end{remark}

\subsection{Iterated fiber bundles}
An \textbf{iterated fiber bundle} is a sequence of maps of topological spaces $E_m \to E_{m-1} \to \cdots \to E_1$ where each map is a fiber bundle. We will say that the iterated fiber bundle is \textbf{of type} $(E_1,F_1,\dots, F_{m-1})$ if $E_{i+1}\to E_i$ has fiber $F_i$. We will say the iterated fiber bundle is an \textbf{iterated Grassmannian bundle} if for all $i$, $F_i\cong \Gr(k_i,n_i)$ for some positive integers $k_i\leq n_i$ and all maps are algebraic. 

\begin{example}\label{ex:FlagBundle}
For example, $\Fl(1^n)$ is an iterated Grassmannian bundle of type $(\bP^{n-1},\bP^{n-2},\dots,\bP^{1})$, which is realized by the sequence of forgetting maps
\[
\Fl(1^n) = \Fl(1^{n-1},1) \to \Fl(1^{n-2},2)\to \Fl(1^{n-3},3)\to\cdots \to \Fl(1,n-1),
\]
in which $\Fl(1^i,n-i)\to \Fl(1^{i-1},n-i+1)$ forgets the $i^{th}$ subspace and is a Grassmannian bundle with fiber $\Gr(1,n-i+1)\cong \bP^{n-i}$.
\end{example}

\subsection{Poincar\'e polynomials and Borel-Moore homology}

For a complex variety $X$, let $H^i(X)$ denote the $i^{th}$ integral singular cohomology group of $X$, and let $H_i^{BM}(X)$ denote the $i^{th}$ integral Borel-Moore homology group. See~\cite{FultonYoungTab} for more details not provided here. For a smooth variety $X$ of complex dimension $n$, we have Poincar\'e duality
\[
H^i(X) \cong H_{2n-i}^{BM}(X).
\]
If $E$ is a rank $d$ vector bundle over $X$, then 
\[
H_i^{BM}(X) \cong H^{2n-i}(X) \cong H^{2n-i}(E) \cong H_{2d+i}^{BM}(E).
\]

A useful way to compute Borel-Moore homology of a complex variety $X$ is to filter it by subvarieties whose Borel-Moore homology is concentrated in even degrees. Indeed, given an open subspace $U$ of $X$ and its complementary closed subspace $C = X\setminus U$, then we have a long exact sequence
\[
\cdots \to H_i^{BM}(C) \to H_i^{BM}(X) \to H_i^{BM}(U) \to H_{i+1}^{BM}(C)\to\cdots 
\]
It then follows by induction that if there exists a filtration of closed subvarieties $\emptyset = X_0\subseteq X_1\subseteq X_2\subseteq \cdots \subseteq X_m = X$ such that $X_i\setminus X_{i-1}$ has Borel-Moore homology concentrated in even degrees, then 
\[
H_*^{BM}(X) = \bigoplus_i H_*^{BM}(X_i\setminus X_{i-1}).
\]
and all odd degree groups vanish. An important class of filtration is an \textbf{affine paving} in which each $X_i\setminus X_{i-1}$ is isomorphic to $\bC^{k_i}$ for some nonnegative integer $k_i$.

Suppose $X$ is a complex variety whose Borel-Moore homology groups are free $\bZ$-modules. We define $\Poin(X;q)$ to be the generating function of the ranks of the Borel-Moore homology groups,
\[
\Poin(X;q) \coloneqq \sum_i \mathrm{rk}(H_i^{BM}(X)) q^i.
\]
If the complex variety $X$ is both compact and smooth, or if it is compact and has an affine paving, then $\Poin(X;q)$ coincides with the usual Poincar\'e polynomial, which is defined as the generating function of the ranks of the singular cohomology groups (by Poincar\'e duality in the first case, or by the Universal Coefficient Theorem in the second case).

Phrased in this language, if $E$ is a vector bundle of rank $d$ over $X$ as above, then 
\begin{equation}\label{eq:BundlePoincare}
\Poin(E;q) = q^{2d} \Poin(X;q).
\end{equation}
Furthermore, if $X$ is an iterated Grassmannian bundle, $X = E_m\to E_{m-1}\to\cdots \to E_1$ of type $(E_1,F_1,\dots, F_{m-1})$, where $F_i\cong \Gr(k_i,n_i)$, then 
\begin{equation}\label{eq:IteratedPoincare}
\Poin(X;q) = \Poin(E_1;q)\Poin(F_1;q)\cdots \Poin(F_{m-1};q).
\end{equation}
If $X$ has a filtration as above, then
\begin{equation}\label{eq:FiltrationPoincare}
\Poin(X;q) = \sum_i \Poin(X_i\setminus X_{i-1};q)
\end{equation}
and all nonzero terms are of even degree. For this reason, we will often state our formulas in terms of $\sqrt{q}$ instead of $q$.

\begin{example}
    In our example of $X = \bP^n$, we have 
    \[
    \Poin(\bP^n;\sqrt{q}) = 1 + q + \cdots + q^{n-1} \eqqcolon [n]_q
    \]
    and in the case of the flag variety, since it is an iterated projective bundle by Example~\ref{ex:FlagBundle},
    \begin{equation}\label{eq:FlagPoincare}
    \Poin(\Fl(1^n);\sqrt{q}) = \prod_{i=1}^{n-1} \Poin(\bP^i;q) = [1]_q[2]_q\cdots [n]_q \eqqcolon [n]_q!
    \end{equation}
    Furthermore, it's well known that
    \begin{equation}\label{eq:GrassPoincare}
    \Poin(\Gr(k,n);\sqrt{q}) = \frac{[n]_q!}{[k]_q![n-k]_q!} \eqqcolon \multibinom{n}{k}_q,
    \end{equation}
    which follows from the Schubert cell decomposition of the Grassmannian.
\end{example}

\subsection{Chern and Segre classes}

To each complex vector bundle $E$ on a quasiprojective variety $X$ are associated \textbf{Chern classes} $c_i(E)\in H^{2i}(X)$. Let $c(E) = c_0(E) + c_1(E) + \cdots$ be the \textbf{total Chern class} where by definition $c_0(E) \coloneqq 1$. The total Chern class enjoys the following properties: 
\begin{itemize}
    \item Given a map $f:X\to Y$ of quasiprojective varieties and a complex vector bundle $E$ on $Y$, we have $f^*(c(E)) = c(f^*(E))$, where the first $f^*$ is the induced map on cohomology and $f^*(E)$ is the pulled back vector bundle on $X$.
    \item Given a short exact sequence $0\to E'\to E\to E''\to 0$ of vector bundles on $X$, then $c(E)=c(E')c(E'')$.
    \item If $E$ is a complex vector bundle of rank $r$, then $c_i(E) = 0$ for $i>r$.
    \item If $E\cong \mathbb{C}^d$, the trivial bundle of rank $r$, then $c(E) = 1$.
\end{itemize}

To each vector bundle $E$ (or more generally cone) on $X$, we can also associate the \textbf{Segre classes}. These are the unique cohomology classes $s_i(E)\in H^{2i}(X)$ such that if we define the \textbf{total Segre class} $s(E) = s_0(E) + s_1(E) + \cdots$, then
\[
s(E) = c(E)^{-1}.
\]

Given variables $x_1,\dots, x_n$, let $e_d(x_1,\dots, x_n)$ and $h_d(x_1,\dots, x_n)$ be the \textbf{ith elementary symmetric polynomial} and \textbf{ith complete homogeneous symmetric polynomial}, respectively. They are defined as follows,
\begin{align*}
e_d(x_1,\dots, x_n) &= \sum_{1\leq i_1<i_2<\cdots < i_d\leq n} x_{i_1}x_{i_2}\cdots x_{i_d}\\
h_d(x_1,\dots, x_n) &= \sum_{1\leq i_1\leq i_2\leq \cdots \leq i_d\leq n} x_{i_1}x_{i_2}\cdots x_{i_d}.
\end{align*}

The basic facts about these polynomials that we need are the following: If $x_1,\dots, x_r$ are the Chern roots of the vector bundle $E$ of rank $r$, then the total Chern class is
\[
c(E) = (1+x_1)(1+x_2)\cdots (1+x_r) = \sum_{d\geq 0} e_d(x_1,\dots, x_r)
\]
and the total Segre class is
\[
s(E) = \frac{1}{(1+x_1)\cdots (1+x_r)} = \sum_{d\geq 0} (-1)^d h_d(x_1,\dots, x_r).
\]
See \cite{FultonIntTh} for more details on Chern and Segre classes.

\section{Characterization of irreducible components}\label{sec:Irred Cmpts}

In this section we prove our main result, Theorem~\ref{thm:GrassBundle}, that all irreducible components of the $\Delta$-Springer fiber $Y_{n,n-1}$ are iterated Grassmannian fiber bundles. We then prove that all intersections of irreducible components are also iterated Grassmannian fiber bundles.

\subsection{Components as iterated Grassmannian fiber bundles}

Let $T$ be a tableau indexing an irreducible component of $Y_{n,(1^{n-1}),s}$ such that $i$ is the unique label in the second column. We will denote by $K^i\coloneqq K^T$ the corresponding irreducible component.

\begin{theorem}\label{thm:ComponentDescrip}
    The $i^{th}$ irreducible component $K^i$ has the following precise description
    \begin{equation}\label{eq:KiDescrip}
    K^i =\{V_\bullet\in \Fl(1^n,s-1) \mid V_{i-1}\subseteq \im(x) \subseteq V_n \subseteq x^{-1}V_{i-1}\}
    \end{equation}
    Here, $x^{-1}V_{i-1}$ is the preimage of the subspace $V_{i-1}$ under the linear map $x: \bC^{n+s-1}\to \bC^{n+s-1}$.
\end{theorem}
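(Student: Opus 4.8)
The plan is to identify the right-hand side of~\eqref{eq:KiDescrip}, which I will call $Z^i$, with the component $K^i=\overline{C_T}$. First I would verify that $Z^i$ is a closed subvariety of $Y_{n,(1^{n-1}),s}$: closedness inside the partial flag variety is immediate since $Z^i$ is defined by incidence conditions, the condition $\im(x)\subseteq V_n$ is already one of those conditions, and the remaining Springer conditions $xV_j\subseteq V_j$ are automatic once we use that our $x$ satisfies $x^2=0$ (its Jordan type is $(2^{n-1},1^{s-n+1})$), hence $\im(x)\subseteq\ker(x)$: for $j\le i-1$ we then have $V_j\subseteq V_{i-1}\subseteq\im(x)\subseteq\ker(x)$ so $xV_j=0$, and for $i\le j\le n$ we have $xV_j\subseteq xV_n\subseteq V_{i-1}\subseteq V_j$. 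Granting, in addition, that $Z^i$ is irreducible of dimension $\binom{n-1}{2}+(s-1)$ and contains $C_T$, the theorem follows at once: $K^i=\overline{C_T}\subseteq Z^i$ are both closed irreducible of dimension $\binom{n-1}{2}+(s-1)$ (for $K^i$, by Theorem~\ref{thm:YDim}), so they are equal.

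To prove irreducibility and compute $\dim Z^i$, I would exhibit $Z^i$ as a tower of fibre bundles over a point. A flag in $Z^i$ is recorded by three successive choices: (i) the partial flag $V_1\subseteq\cdots\subseteq V_{i-1}$ inside the fixed $(n-1)$-dimensional space $\im(x)$, which ranges over a flag variety of dimension $\binom{n-1}{2}-\binom{n-i}{2}$; (ii) the line $V_n/\im(x)$ inside $(x^{-1}V_{i-1})/\im(x)$, which — using $\im(x)\subseteq x^{-1}V_{i-1}$ (as $x\,\im(x)=\im(x^2)=0$) and $\dim x^{-1}V_{i-1}=\dim V_{i-1}+\dim\ker(x)=(i-1)+s$ — is a $\bP^{s-n+i-1}$; and (iii) the interpolating flag $V_i\subseteq\cdots\subseteq V_{n-1}$ inside $V_n/V_{i-1}\cong\bC^{n-i+1}$, a flag variety of dimension $\binom{n-i+1}{2}$. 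Each of the three successive projections is a Zariski-locally-trivial fibre bundle with smooth irreducible fibre over an irreducible base, so $Z^i$ is irreducible, and the dimensions telescope:
\[
\dim Z^i=\left(\binom{n-1}{2}-\binom{n-i}{2}\right)+(s-n+i-1)+\binom{n-i+1}{2}=\binom{n-1}{2}+(s-1),
\]
using $\binom{n-i+1}{2}-\binom{n-i}{2}=n-i$.

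To prove $C_T\subseteq Z^i$, recall $C_T=X^\circ_w\cap Y_{n,(1^{n-1}),s}$ with $w=[n-1,\dots,n-i+1,\,n+s-1,\,n-i,\dots,1]$, so $w_j\le n-1$ for every $j\ne i$ and $w_i=n+s-1$. Writing the spanning vectors of a flag in $X^\circ_w$ in the form $v_j=e_{w_j}+\sum_{\ell<w_j}a_{j,\ell}e_\ell$ of~\eqref{eq:spanSchubform}, the inequality $w_j\le n-1$ puts every $v_j$ with $j\ne i$ inside $\vspan\{e_1,\dots,e_{n-1}\}=\im(x)\subseteq\ker(x)$; this simultaneously gives $V_{i-1}\subseteq\im(x)$ and $xv_j=0$ for all $j\ne i$. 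Since $\im(x)\subseteq V_n$ holds because the flag lies in $Y_{n,(1^{n-1}),s}$, the only remaining point is $xv_i\in V_{i-1}$. Here I would use the Springer condition $xV_i\subseteq V_i$: together with $xV_{i-1}=0$ it gives $xv_i=u+c\,v_i$ for some $u\in V_{i-1}$ and $c\in\bC$, and then comparing $e_{n+s-1}$-coefficients — which vanish for $xv_i\in\im(x)$ and for $u\in V_{i-1}$ but equal $1$ for $v_i$ — forces $c=0$, hence $xv_i=u\in V_{i-1}$. This shows $C_T\subseteq Z^i$, and hence $K^i=Z^i$ by the dimension count above.

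The step I expect to be the main obstacle is making the fibre-bundle structure of the second paragraph precise: one must check that the families of quotients $(x^{-1}V_{i-1})/\im(x)$ and $V_n/V_{i-1}$ assemble into honest vector bundles over the base of the preceding stage, so that the relevant $\bP$- and flag-bundles are their associated bundles and the three-stage parametrization is locally trivial rather than merely a set-theoretic bijection. A minor point is the boundary case $s=n-1$, where $\im(x)=\ker(x)$ and the fibre $\bP^{s-n+i-1}$ degenerates when $i=1$, correctly recording that $K^1$ is empty there; this needs only a quick separate check and no new idea.
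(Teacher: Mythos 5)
Your proposal is correct and follows essentially the same route as the paper: show the right-hand side $Z^i$ is a closed subvariety of $Y_{n,(1^{n-1}),s}$ (via $\im(x)\subseteq\ker(x)$), that it is irreducible of dimension $\binom{n-1}{2}+(s-1)$ via the same tower of Grassmannian/flag/projective bundles, that $C_T\subseteq Z^i$ from the Schubert-cell coordinates, and conclude $K^i=Z^i$ by comparing dimensions using Theorem~\ref{thm:YDim}. Your coefficient-of-$e_{n+s-1}$ argument for $xv_i\in V_{i-1}$ just makes explicit a step the paper states more briefly, and merging the first two bundle stages into one partial flag variety is an inessential repackaging.
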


In order to prove Theorem~\ref{thm:ComponentDescrip}, we need to first prove some basic properties of the right-hand side of \eqref{eq:KiDescrip}. Let $Z^i$ denote this subspace of $Y_{n,(1^{n-1}),s}$,
\[Z^i\coloneqq\{V_\bullet\in \Fl(1^n,s-1) \mid V_{i-1}\subseteq \im(x) \subseteq V_n \subseteq x^{-1}V_{i-1} \}.\]
\begin{remark}
    Observe that in the case $s>n-1$, the subspaces $Z^i$ for $i=1,\dots, n$ are all nonempty (in fact we will characterize the permutation flags contained in it in Subsection~\ref{subsec:PermFlags}). 
    
    On the other hand, in the case $s=n-1$, then $Z^1$ in particular is empty. Indeed, if $V_\bullet\in Z^1$, then $V_n\subseteq x^{-1}V_0 = \ker(x)$. However, $V_n$ is $n$-dimensional and $\ker(x)$ is $(n-1)$-dimensional when $s=n-1$, a contradiction. The corresponding ``irreducible component'' $K^1$ is undefined when $s=n-1$ because there is no such standard Young tableau $T$ with $1$ in the top-right cell. Thus, throughout the article we will implicitly assume $2\leq i\leq n$ in the case when $s=n-1$, and otherwise $1\leq i \leq n$.
\end{remark}

\begin{lemma}\label{lem:IteratedGrass}
    The variety $Z^i$ is an iterated Grassmannian fiber bundle of dimension $\binom{n-1}{2} + (s-1)$.
\end{lemma}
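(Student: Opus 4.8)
The plan is to exhibit $Z^i$ as the total space of an explicit tower of Grassmannian bundles, built by peeling off one subspace of the flag at a time, and then to read off the dimension from the fiber dimensions. Write $r = \dim \im(x) = n-1$ and note $\dim x^{-1}V_{i-1} = \dim V_{i-1} + \dim\ker(x) = (i-1) + (s)$ when $s \geq n$ (and adjust by one when $s = n-1$, where $\ker(x)$ has dimension $n-1$); in all cases the dimensions of the bounding subspaces $\im(x)$ and $x^{-1}V_{i-1}$ are determined once $V_{i-1}$ is fixed. So the defining conditions naturally split the flag $V_\bullet = (V_1 \subseteq \cdots \subseteq V_{i-1} \subseteq \cdots \subseteq V_n)$ into three stages.

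First I would handle the ``bottom'' part $V_1 \subseteq \cdots \subseteq V_{i-1} \subseteq \im(x)$: forgetting $V_{i-1}, V_{i-2}, \dots, V_1$ one at a time realizes the space of such partial flags inside the fixed space $\im(x) \cong \bC^{n-1}$ as an iterated Grassmannian (in fact projective) bundle over a point, namely $\Fl(1^{i-1}, n-i)$, with fibers $\bP^{n-2}, \bP^{n-3}, \dots, \bP^{n-i}$ as in Example~\ref{ex:FlagBundle}. Next, once $V_{i-1}$ is chosen, the conditions $\im(x) \subseteq V_n \subseteq x^{-1}V_{i-1}$ say precisely that $V_n/\im(x)$ is an arbitrary subspace of the fixed-dimensional quotient $(x^{-1}V_{i-1})/\im(x)$ of the appropriate codimension; since $x^{-1}V_{i-1}$ varies algebraically with $V_{i-1}$, this exhibits the choice of $V_n$ as a Grassmannian bundle over the base of choices made so far. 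Finally, the ``middle'' part $V_i \subseteq V_{i+1} \subseteq \cdots \subseteq V_{n-1}$ is an honest complete partial flag in $V_n/V_{i-1}$ (a space of fixed dimension $n-i+1$), with no further constraints, so forgetting these subspaces one at a time gives another iterated projective bundle, with fibers $\bP^{n-i}, \bP^{n-i-1}, \dots, \bP^{1}$. Composing all these bundle maps in the order (choose $V_n$, then the middle flag, then the bottom flag) — or any consistent order — exhibits $Z^i$ as an iterated Grassmannian fiber bundle; I should remark that each forgetting map is a genuine Zariski-locally-trivial fibration because the ``moving'' subspace $x^{-1}V_{i-1}$ (equivalently the relevant quotient bundle) is locally trivial, which is where I would invoke that $x$ is a fixed linear map and preimages of a varying subspace under a fixed map form a vector subbundle.

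For the dimension count, I would simply add up the fiber dimensions: the bottom flag contributes $\sum_{j=n-i}^{n-2} j$, the choice of $V_n$ contributes $\dim\Gr$ of a $(n-i)$-plane (the codimension of $\im(x)$ in $x^{-1}V_{i-1}$) — equal to $(n-i)\cdot(\text{something})$, which one checks telescopes correctly — and the middle flag contributes $\sum_{j=1}^{n-i} j$. Summing and simplifying should give $\binom{n-1}{2} + (s-1)$, matching $\dim Y_{n,(1^{n-1}),s}$ from Theorem~\ref{thm:YDim}; I would present this as a short explicit computation rather than grinding it here. (A sanity check: when $i = n$ and $s = n-1$ one should recover $\binom{n-1}{2} + (n-2)$.)

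The main obstacle I anticipate is not the combinatorics of the dimension count but carefully justifying \emph{local triviality} of the map that chooses $V_n$: here the "ambient" space $x^{-1}V_{i-1}$ genuinely moves with $V_{i-1}$, so one must argue that $\{(V_{i-1}, W) : W = x^{-1}V_{i-1}\}$ defines a vector subbundle of the trivial bundle over the base, and then that the relative Grassmannian of subspaces $V_n$ with $\im(x) \subseteq V_n \subseteq x^{-1}V_{i-1}$ is a fiber bundle. This follows from the fact that $V_{i-1} \mapsto x^{-1}V_{i-1}$ is given fiberwise by a fixed linear map, hence is a morphism to the relevant Grassmannian and pulls back the tautological bundle; I would spell this out as the one genuinely non-formal point, and everything else reduces to the standard fact (Example~\ref{ex:FlagBundle}) that spaces of nested subspaces of a fixed or locally-trivially-varying vector space are iterated Grassmannian bundles.
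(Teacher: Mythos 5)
Your bundle decomposition is essentially the one in the paper: build the flag by choosing $V_1\subseteq\cdots\subseteq V_{i-1}\subseteq\im(x)$, then $V_n$ with $\im(x)\subseteq V_n\subseteq x^{-1}V_{i-1}$, then the unconstrained flag in $V_n/V_{i-1}$; your remark that the $V_n$-step is locally trivial because $V_{i-1}\mapsto x^{-1}V_{i-1}$ is the fiberwise preimage under a fixed linear map (hence a subbundle) is a reasonable elaboration of a point the paper leaves implicit.

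The gap is in the dimension half of the lemma, which you defer and whose sketched ingredients contain two errors. First, $\dim\ker(x)=s$ for every $s\geq n-1$ (the Jordan type $(2^{n-1},1^{s-n+1})$ has $s$ blocks), so $\dim x^{-1}V_{i-1}=(i-1)+s$ uniformly; your ``adjust by one when $s=n-1$'' is spurious and, if carried through, would yield $\binom{n-1}{2}+(s-2)$ in precisely the case $Y_{n,n-1}$ that matters most here. Second, the fiber for the choice of $V_n$ is not ``$\dim\Gr$ of an $(n-i)$-plane'' contributing $(n-i)\cdot(\text{something})$: since $\dim(V_n/\im(x))=n-(n-1)=1$, it is the projective space $\bP\bigl(x^{-1}V_{i-1}/\im(x)\bigr)\cong\bP^{\,s+i-n-1}$, contributing $s+i-n-1$. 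Indeed your own sanity check $i=n$, $s=n-1$ exposes the problem: with an $(n-i)$-factor the $V_n$-step would contribute $0$ instead of $s-1$, and the total would be $\binom{n-1}{2}$ rather than $\binom{n-1}{2}+(n-2)$. With the correct fibers the count closes: $\bigl[\binom{n-1}{2}-\binom{n-i}{2}\bigr]+(s+i-n-1)+\binom{n-i+1}{2}=\binom{n-1}{2}+(s-1)$. Since this dimension formula is exactly what the identification $K^i=Z^i$ in Theorem~\ref{thm:ComponentDescrip} hinges on, the computation needs to be carried out explicitly and correctly rather than left as ``should give.''
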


\begin{proof}
    A flag $V_\bullet\in Z^i$ is uniquely determined by the following process:
    \begin{enumerate}
        \item Choose $V_{i-1}$ an $(i-1)$-dimensional subspace of $\im(x)$. 
        \item Choose a complete flag in $\Fl(V_{i-1})$. 
        \item Choose an $n$-dimensional subspace $V_n$ such that $\im(x) \subseteq V_n\subseteq x^{-1}V_{i-1}$. Equivalently, choose an element of $\mathbb{P}(x^{-1}V_{i-1}/\im(x))$. 
        \item Choose the remainder of the flag so that $V_{i-1}\subseteq V_i \subseteq V_{i+1}\subseteq\cdots \subseteq  V_n$, or equivalently an element of $\Fl(V_n/V_{i-1})$.
    \end{enumerate}
    In step (3), observe that $\dim(x^{-1}V_{i-1}) = (i-1)+s$ for all choices of $V_{i-1}$. Thus, once $V_{i-1}$ is chosen then $\dim(x^{-1}V_{i-1}/\im(x)) = (i-1)+s-(n-1) = s+i-n$. Furthermore, $V_n/\im(x)$ has dimension $n-(n-1)=1$, so the choice of $V_n$ in step (3) is equivalent to a choice of element of $\bP(x^{-1}V_{i-1}/\im(x))$, as claimed. 
    
    Therefore, $Z^i$ is an iterated Grassmannian fiber bundle of type $\Gr(i-1,n-1)$, $\Fl(1^{i-1})$, $\mathbb{P}^{s+i-n-1}$, and $\Fl(1^{n-i+1})$, respectively corresponding to each step above. 
    The dimension of $Z^i$ is the sum of the dimensions of the fibers, which is
    \begin{align*}
    \dim(Z^i) & =  (i-1)(n-i) + \binom{i-1}{2} + (s+i-n-1) + \binom{n-i+1}{2}\\
        %& = (i-1)((n-1)-i+1) + \frac{(i-1)(i-2)}{2} + (i-2) + \binom{(n-1)-i+2}{2} \\
        &= \frac{n(n-3)}{2}+s\\
        &= \binom{n-1}{2} + (s-1),
    \end{align*}
    and the proof is complete.
\end{proof}

\begin{lemma}\label{lem:Smoothsubspace}
    The iterated Grassmannian bundle $Z^i$ is a smooth closed subspace of $Y_{n,(1^{n-1}),s}$ of dimension $\binom{n-1}{2} + (s-1)$.
\end{lemma}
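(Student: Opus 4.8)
The plan is to prove three things and then combine them: that $Z^i$ is genuinely contained in $Y_{n,(1^{n-1}),s}$, that $Z^i$ is smooth, and that $Z^i$ is closed; the dimension statement is already supplied by Lemma~\ref{lem:IteratedGrass}, so there is nothing further to compute there.

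First I would check the inclusion $Z^i\subseteq Y_{n,(1^{n-1}),s}$. The key point is that $x$ has Jordan type $(2^{n-1},1^{s-n+1})$, so $x^2=0$ and hence $\im(x)\subseteq\ker(x)$. Given $V_\bullet\in Z^i$, so that $V_{i-1}\subseteq\im(x)\subseteq V_n\subseteq x^{-1}V_{i-1}$, one argues: for $j\leq i-1$ we have $V_j\subseteq V_{i-1}\subseteq\im(x)\subseteq\ker(x)$, so $xV_j=0\subseteq V_j$; for $i\leq j\leq n$ we have $V_j\subseteq V_n\subseteq x^{-1}V_{i-1}$, hence $xV_j\subseteq V_{i-1}\subseteq V_j$. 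Together with the condition $\im(x)\subseteq V_n$ built into $Z^i$, this shows $V_\bullet$ lies in $Y_{n,(1^{n-1}),s}$.

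Next, smoothness will follow directly from Lemma~\ref{lem:IteratedGrass}: $Z^i$ is an iterated fiber bundle whose base $\Gr(i-1,n-1)$ and whose successive fibers $\Fl(1^{i-1})$, $\bP^{s+i-n-1}$, $\Fl(1^{n-i+1})$ are all smooth. A fiber bundle over a smooth base with smooth fiber is locally (in the \'etale or analytic topology) a product of smooth varieties, hence smooth, and an induction on the length of the tower yields smoothness of $Z^i$. For closedness I would show $Z^i$ is closed already in the ambient partial flag variety $\Fl(1^n,s-1)$: it is cut out by the incidence conditions $V_{i-1}\subseteq\im(x)$, $\im(x)\subseteq V_n$, and $V_n\subseteq x^{-1}V_{i-1}$ (the last equivalent to $xV_n\subseteq V_{i-1}$), each of which is a Schubert-type rank condition and hence closed, since containment of or in a fixed subspace is closed and $\{V_\bullet : \dim(xV_n+V_{i-1})\leq i-1\}$ is closed by upper semicontinuity of rank. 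Since $Y_{n,(1^{n-1}),s}$ is itself a closed subvariety of $\Fl(1^n,s-1)$, it follows that $Z^i$ is a closed subspace of $Y_{n,(1^{n-1}),s}$, and combining with Lemma~\ref{lem:IteratedGrass} gives the dimension $\binom{n-1}{2}+(s-1)$.

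There is no serious obstacle in this lemma; the only step demanding care is the first one, where one must invoke $x^2=0$ to see that the incidence conditions defining $Z^i$ actually force the Springer-type stability $xV_j\subseteq V_j$ for every $j$, rather than merely placing $V_\bullet$ in the ambient flag variety.
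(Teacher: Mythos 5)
Your proposal is correct and follows essentially the same route as the paper: smoothness and the dimension come directly from Lemma~\ref{lem:IteratedGrass}, containment in $Y_{n,(1^{n-1}),s}$ is checked by the same two-case argument ($V_j\subseteq V_{i-1}\subseteq \im(x)\subseteq\ker(x)$ for $j\leq i-1$, and $xV_j\subseteq xV_n\subseteq V_{i-1}\subseteq V_j$ for $j\geq i$), and closedness follows because the defining incidence conditions, with $V_n\subseteq x^{-1}V_{i-1}$ rewritten as $xV_n\subseteq V_{i-1}$, are closed. Your only (harmless) variation is verifying closedness in the ambient $\Fl(1^n,s-1)$ rather than directly inside $Y_{n,(1^{n-1}),s}$, and your explicit appeal to $x^2=0$ makes the paper's implicit use of $\im(x)\subseteq\ker(x)$ transparent.
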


\begin{proof}
    The smoothness property and the dimension formula follow immediately by Lemma~\ref{lem:IteratedGrass}. It suffices to show that $Z^i$ is a subspace of $Y_{n,1^{n-1},s}$ and is closed. 

    Let $V_{\bullet} \in Z^i$. Observe that $\im(x)\subseteq V_n$ is immediate from the definition of $Z^i$. It suffices to show $xV_j\subseteq V_j$ for all $j\leq n$. For $j\leq i-1$, $V_j \subseteq V_{i-1} \subseteq \im(x)\subseteq \ker(x)$, so $xV_j = 0\subseteq V_j$. For $i\leq j\leq n$, observe that $V_n\subseteq x^{-1}V_{i-1}$ is equivalent to $xV_n\subseteq V_{i-1}$, and hence
    \[
    xV_j\subseteq xV_n \subseteq V_{i-1}\subseteq V_j.
    \]
    Thus, $Z^i\subseteq Y_{n,(1^{n-1}),s}$. To see that $Z^i$ is closed in $Y_{n,(1^{n-1}),s}$, observe that $V_{i-1}\subseteq \im(x)$ is a closed condition, and $V_n\subseteq x^{-1}V_{i-1}$ is equivalent to $xV_n \subseteq V_{i-1}$ which is also a closed condition.
\end{proof}

\begin{proposition}\label{prop:ComponentContain}
    We have the inclusion $K^i\subseteq Z^i$.
\end{proposition}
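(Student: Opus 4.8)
The plan is to push the statement down to the level of cells. By Lemma~\ref{lem:Smoothsubspace} the subvariety $Z^i$ is closed in $Y_{n,(1^{n-1}),s}$, and by Theorem~\ref{thm:YDim} we have $K^i=\overline{C_T}$ where $T$ is the standard Young tableau with $i$ in its second column, so it is enough to prove the containment of the open cell $C_T\subseteq Z^i$ and then take closures. Thus I would fix a flag $V_\bullet\in C_T=X_w^\circ\cap Y_{n,(1^{n-1}),s}$ with $w=[n-1,\dots,n-i+1,n+s-1,n-i,\dots,1]$ and verify directly the chain $V_{i-1}\subseteq\im(x)\subseteq V_n\subseteq x^{-1}V_{i-1}$ that defines $Z^i$.

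The first step is to write the spanning vectors $v_1,\dots,v_n$ of $V_\bullet$ in Schubert-cell form and combine this with the explicit description of $x$ (recall $\im(x)=\vspan\{e_1,\dots,e_{n-1}\}$ and $x^2=0$, so $\im(x)\subseteq\ker(x)$). Reading off $w$, one finds that for every $j\neq i$ the vector $v_j$ lies in $\vspan\{e_1,\dots,e_{n-1}\}=\im(x)$, and that as $j$ ranges over $\{1,\dots,n\}\setminus\{i\}$ the leading terms of the $v_j$ run through $e_{n-1},e_{n-2},\dots,e_1$; hence these $n-1$ vectors form a basis of $\im(x)$. This gives at once $V_{i-1}=\vspan\{v_1,\dots,v_{i-1}\}\subseteq\im(x)$ and $\im(x)\subseteq V_n$, the first two containments. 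The remaining vector $v_i$ has the form $e_{n+s-1}$ plus a linear combination of the $e_l$ with $l\leq n-i$ and of the ``non-pivot'' basis vectors $e_m$ with $n\leq m\leq n+s-2$; in particular $v_i$ is the only one of $v_1,\dots,v_i$ with nonzero $e_{n+s-1}$-coordinate, and a direct computation (valid uniformly whether $s=n-1$ or $s>n-1$) shows that $xv_i\in\vspan\{e_1,\dots,e_{n-1}\}$, so $xv_i$ has vanishing $e_{n+s-1}$-coordinate.

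For the last containment $V_n\subseteq x^{-1}V_{i-1}$, i.e.\ $xV_n\subseteq V_{i-1}$, the Springer stability conditions enter. Since $v_j\in\im(x)\subseteq\ker(x)$ for $j\neq i$, we have $xV_n=\bC\cdot xv_i$, so it suffices to prove $xv_i\in V_{i-1}$. From $V_\bullet\in Y_{n,(1^{n-1}),s}$ we have $xV_i\subseteq V_i$; because $xv_j=0$ for $j<i$ this says exactly $xv_i\in V_i$. But any element of $V_i$ with vanishing $e_{n+s-1}$-coordinate lies in $V_{i-1}$ (as $v_i$ is the unique spanning vector of $V_i$ contributing to that coordinate), and $xv_i$ has vanishing $e_{n+s-1}$-coordinate by the previous step; hence $xv_i\in V_{i-1}$. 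This establishes $C_T\subseteq Z^i$, and taking closures and using that $Z^i$ is closed gives $K^i=\overline{C_T}\subseteq\overline{Z^i}=Z^i$.

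The main obstacle I anticipate is the bookkeeping around the ``non-pivot'' Schubert coordinates of $v_i$ indexed by $e_n,\dots,e_{n+s-2}$: these make $xv_i$ a generically nonzero element of $\im(x)$, so one cannot simply argue $xV_n=0$. The key observation that makes the argument go through is that, whatever $xv_i$ is, it has no $e_{n+s-1}$-component, so the Springer stability of $V_i$ itself (rather than only of $V_n$) forces $xv_i$ into the proper subspace $V_{i-1}$. A secondary point to check carefully is that the single computation of $xv_i$ correctly covers both the regime $s=n-1$ (where $xe_{n+s-1}=xe_{2n-2}=e_{n-1}\neq 0$) and the regime $s>n-1$ (where $xe_{n+s-1}=0$).
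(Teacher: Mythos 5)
Your proof is correct and follows essentially the same route as the paper: reduce to the cell $C_T$ using that $Z^i$ is closed, then verify the three containments defining $Z^i$ from the Schubert-coordinate form of the spanning vectors. The only difference is that you spell out the step the paper leaves as ``not hard to check,'' namely upgrading the stability condition $xv_i\in V_i$ to $xv_i\in V_{i-1}$ via the vanishing $e_{n+s-1}$-coordinate, which is a welcome addition but not a different argument.
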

\begin{proof}
    Recall that $K^i = K^T=\overline{C_T}$ where $C_T = X_w^\circ \cap Y_{n,(1^{n-1}),s}$ such that $w=[n-1,n-2,\dots,n-i+1,n+s-1,n-i,\dots, 1]$ and $i$ is the entry in the top-right box of the standard Young tableau $T$ of shape $(2,1^{n-2})/(1^{s-n+1})$.
    %; that is, the permutation that corresponds to the tableau $T$. 
    In Lemma~\ref{lem:Smoothsubspace}, we found $Z^i$ to be closed, so it suffices to show $C_T\subseteq Z^i$.

        The matrix representatives of the flags in $C_T$ are of the following form:
        \[
        \begin{bNiceMatrix}[t][first-row,first-col]
            &              &           &         & f_i       &        &      &  \\
        1 & *         & \dots     & *        & *       & *       & \dots   & 1  \\
  \vdots & \vdots    &           & \vdots   & \vdots  &\vdots  & \iddots &    \\
    n-i & \vdots    &           & *        & *       & 1      &         &    \\
   n-i+1 & \vdots    & \iddots   & 1        & 0       &        &         &    \\
         & *         & \iddots   &          &         &        &         &    \\
       n-1 & 1         &           &          &         &        &         &    \\
        \hline
     n &           &           &          & *       &        &         &    \\
         &           &           &          & \vdots  &        &         &    \\
       n+s-1 &           &           &          & 1       &        &         &    \\
         \CodeAfter
            \OverBrace[shorten,yshift=3pt]{1-1}{2-3}{i-1}
            \OverBrace[shorten,yshift=3pt]{1-5}{5-7}{n-i}
        \end{bNiceMatrix}
        \]
    where the entries $\ast$ are free parameters subject to the conditions defining $Y_{n,(1^{n-1}),s}$, and the columns are $f_j$. By the definition of $Y_{n,(1^{n-1}),s}$, we must have $xf_i\in V_{i-1}$, and it is not hard to check that this is the only condition on the columns.

    % Notice that the $i-1$ many pivots in columns $f_1,\dots,f_{i-1}$ eliminate precisely that many parameters in $xf_i$, and force the remaining parameters in rows $t_{n},\dots,t_{2n-i}$ to depend on the parameters in the previous columns, leaving $(n-i)$ many free parameters in the top half of that column, and $(s-n)-(n-i)=s-i$ many in the bottom half. The remaining columns $f_1,\dots,f_{i-1}$ and $f_{i+1},\dots,f_{n}$ generate $\frac{n(n-1)}{2}={n-1\choose2}$ many more parameters. Counting these parameters, the dimension of $C_T$ is shown to be ${n-1\choose2}+(s-1)=\dim Z^i$.

    We also check that the defining property of $Z^i$ is satisfied for $C_T$. 
    First, $V_{i-1}\subseteq \im(x)$ since by the definition of the Schubert cell $X_w^\circ$, the columns $f_1,\dots, f_{i-1}$ are in $\vspan\{e_1,\dots, e_{n-1}\}=\im(x)$. 
    Second, $\im(x) \subseteq V_n$ since all rows $1,\dots,n-1$ contain a pivot. So,  $\im(x) = \vspan\{e_1,\dots,e_{n-1}\} \subseteq V_n$. 
    Finally, we can see that $V_n\subseteq x^{-1}V_{i-1}$ by inspecting the equivalent containment $xV_n\subseteq V_{i-1}$, which in turn is equivalent to the inclusion $\vspan(xf_i)\subseteq V_{i-1}$, which is satisfied by $C_T$. Hence, we can conclude $C_T\subseteq Z^i$, and thus $K^i\subseteq Z^i$.
    % alternative way to write proof?
    %\begin{center}
    %\begin{align*}
    %    V_i &= \text{span}(f_1,f_1,\dots,f_i) \\
    %    f_1 &= e_k + \sum_{i=1}^{k-1} a_{i1} e_i \\
    %    f_2 &= e_k + \sum_{i=1}^{k-2} a_{i2} e_i \\
    %    &\vdots \\
    %    f_b &= e_{2k} + \sum_{i=k+1}^{n-1} a_{ib}e_i + \sum_{i=1}^{k-b+1} a_{ib} e_i \\
    %    \text{So, } Nf_{b} &= e_k + \sum_{i=k+1}^{n-1} a_{ib}e_{i-k} \in \text{span}(f_1,f_1,\dots,f_{b-1}) \\
    %\end{align*}
    %\end{center}
\end{proof}

\begin{proof}[Proof of Theorem~\ref{thm:ComponentDescrip}]
    The space $Z^i$ is irreducible because it is an iterated Grassmannian bundle. By Proposition~\ref{prop:ComponentContain}, $K^i \subseteq Z^i$.
    Therefore, it remains to show that $\dim K^i = \dim Z^i$. The dimension of $K^i$ is simply the dimension of the variety $Y_{n,(1^{n-1}),s}$, which is $(s-1)+{n-1\choose2}$ by Theorem \ref{thm:YDim}. We showed in Lemma~\ref{lem:IteratedGrass} that the dimension of $Z^i$ has the same formula (when it is nonempty), and thus $K^i=Z^i$.
\end{proof}

\begin{remark}
    Let us observe that by the characterization in Theorem~\ref{thm:ComponentDescrip}, each irreducible component is a nilpotent Hessenberg variety in $\Fl(1^n,s-1)$. Indeed, $K^i$ may be rewritten as
    \[
    K^i = \{V_\bullet\in \Fl(1^n,s-1)\mid xV_{i-1}\subseteq V_0,\, xV_n \subseteq V_{i-1}\}.
    \]
    Thus, $K^i$ is the Hessenberg variety associated to the operator $x$ and Hessenberg function $h = (0,0,\dots, 0,i-1,\dots, i-1,n+s-1)$.
\end{remark}

\begin{theorem}\label{thm:GrassBundle}
    The irreducible component $K^i$ is an iterated Grassmannian bundle of type $\Gr(i-1,n-1)$, $\Fl(1^{i-1})$, $\mathbb{P}^{s+i-n-1}$, $\Fl(1^{n-i+1})$.
\end{theorem}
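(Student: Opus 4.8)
The plan is to obtain Theorem~\ref{thm:GrassBundle} as an essentially immediate consequence of the two results that precede it. By Theorem~\ref{thm:ComponentDescrip} we have the equality of varieties $K^i = Z^i$, and the proof of Lemma~\ref{lem:IteratedGrass} already exhibits $Z^i$, via the four-step selection process (choose $V_{i-1}$, then a complete flag inside $V_{i-1}$, then $V_n$, then a complete flag of $V_n$ extending $V_{i-1}$), as an iterated Grassmannian bundle whose successive fibers are $\Gr(i-1,n-1)$, $\Fl(1^{i-1})$, $\bP^{s+i-n-1}$, and $\Fl(1^{n-i+1})$. So the only work remaining is to certify that the set-theoretic parametrization in that proof is genuinely a tower $K^i = E_4 \to E_3 \to E_2 \to E_1$ of \emph{algebraic} fiber bundles.

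To do this I would name a tautological bundle at each stage. Take $E_1 = \Gr(i-1,\im x) \cong \Gr(i-1,n-1)$, with tautological subbundle $\mathcal{V}_{i-1} \subseteq \underline{\im x}$, and let $E_2 = \Fl(\mathcal{V}_{i-1}) \to E_1$ be its complete flag bundle, with fiber $\Fl(1^{i-1})$. Since $x^2 = 0$ and $x$ surjects onto $\im x$, the bundle map $\underline{\bC^{n+s-1}} \to \underline{\im x}/\mathcal{V}_{i-1}$ sending $v \mapsto xv \bmod \mathcal{V}_{i-1}$ is surjective; its kernel $x^{-1}\mathcal{V}_{i-1}$ is therefore a subbundle of rank $(n+s-1)-(n-i) = s+i-1$, and it contains $\underline{\im x}$ because $x(\im x) = 0 \subseteq \mathcal{V}_{i-1}$. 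Pulling the rank-$(s+i-n)$ quotient bundle $\mathcal{Q} = x^{-1}\mathcal{V}_{i-1}/\underline{\im x}$ back to $E_2$ and projectivizing yields $E_3 \to E_2$ with fiber $\bP^{s+i-n-1}$ (the tautological line encoding the choice of $V_n/\im x$), and finally $E_4 = \Fl(\mathcal{V}_n/\mathcal{V}_{i-1}) \to E_3$ is the complete flag bundle of the rank-$(n-i+1)$ bundle $\mathcal{V}_n/\mathcal{V}_{i-1}$, with fiber $\Fl(1^{n-i+1})$. Local triviality of each map is the standard fact that flag bundles and projectivizations of vector bundles are Zariski-locally trivial, and repeating the bijection from the proof of Lemma~\ref{lem:IteratedGrass} identifies the total space $E_4$ with $Z^i = K^i$.

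I do not expect a genuine obstacle here, since the substantive content was already dispatched in Lemma~\ref{lem:IteratedGrass} and Theorem~\ref{thm:ComponentDescrip}. The only two points warranting a line of care are: (i) confirming that $x^{-1}\mathcal{V}_{i-1}$ has constant rank, which is exactly the dimension count $\dim x^{-1}V_{i-1} = (i-1)+s$ used in Lemma~\ref{lem:IteratedGrass} and which relies on $\mathcal{V}_{i-1} \subseteq \im x \subseteq \ker x$; and (ii) the degenerate case $s = n-1$, where one must restrict to $2 \le i \le n$ so that $Z^i$ is nonempty and $\bP^{s+i-n-1}$ has nonnegative dimension, which is precisely the standing convention adopted after the definition of $Z^i$.
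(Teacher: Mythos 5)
Your proposal is correct and follows exactly the paper's route: the paper's proof of Theorem~\ref{thm:GrassBundle} is simply that it follows from Theorem~\ref{thm:ComponentDescrip} together with the fiber-bundle structure on $Z^i$ already exhibited in the proof of Lemma~\ref{lem:IteratedGrass}. Your extra paragraph making the tautological-bundle construction and local triviality explicit is sound but only fills in details the paper leaves implicit.
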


\begin{proof}
    This follows immediately from Theorem~\ref{thm:ComponentDescrip} and the proof of Lemma~\ref{lem:IteratedGrass}.
\end{proof}

\subsection{Permutation flags}\label{subsec:PermFlags}

By \cite[Lemma 3.11]{GLW}, the permutation flags $V_\bullet$ in $Y_{n,(1^{n-1}),s}$ are in bijection with row-increasing partial labelings of the cells of the shape $(1^{s-n+1},2^{n-1})$ (as drawn in Remark \ref{rmk:PermutationAlgo}) in which $n$ of the cells are labeled $1,2,\dots, n$, there is no empty cell to the left of a labeled cell, and all cells in the first column are labeled. Precisely, let $V_\bullet$ be the permutation flag associated to the permutation $w$, so that $V_i = \vspan\{e_{w_1},\dots, e_{w_i}\}$ for all $i$. Then define a labeling such that the cell corresponding to $e_{w_i}$ in the diagram of Remark~\ref{rmk:PermutationAlgo} is labeled $i$. It is easy to check that this labeling has the desired properties. 

For example, the first two partial row-increasing fillings pictured in the top row of Figure~\ref{fig:Cells1} correspond to the permutation flag indexed by $w=3214$ and $w=3124$, respectively. The first partial row-increasing filling in the second row of Figure~\ref{fig:Cells2} corresponds to $w=241$.

Next, we give a characterization of the permutation flags contained in each irreducible component $K^i$.

\begin{corollary}
    The permutation flags in the irreducible component $K^i$ are in bijection with row-increasing partial fillings such that either (1) there is a unique row containing two letters filled with $a,b$ such that $a < i \leq b$ or (2) the unique letter $b$ in the second column has no cell to its left and $i\leq b$.
\end{corollary}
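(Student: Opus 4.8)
The plan is to translate the geometric description of $K^i = Z^i$ from Theorem~\ref{thm:ComponentDescrip} into the combinatorial language of partial row-increasing fillings, using the dictionary from \cite[Lemma 3.11]{GLW} recalled at the start of this subsection. A permutation flag $V_\bullet$ in $Y_{n,(1^{n-1}),s}$ corresponds to a partial filling in which the cell holding $e_{w_j}$ is labeled $j$; under the nilpotent $x$ of Figure~\ref{fig:NilpotentPic}, a cell in the second column maps to the cell immediately to its left, a cell in the first column maps to $0$. The first column has cells corresponding to $e_1,\dots,e_{n-1}$ (bottom to top) and the second column to $e_n,\dots,e_{n+s-1}$, with $x e_{n+\ell}=e_{1+\ell}$. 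So I would first record, for a permutation flag, what the spaces $\im(x)$, $V_{i-1}$, $V_n$, and $x^{-1}V_{i-1}$ are in terms of which basis vectors appear: $\im(x)=\vspan\{e_1,\dots,e_{n-1}\}$ is exactly the span of the first-column basis vectors; $V_{i-1}$ is the span of the basis vectors in the cells labeled $1,\dots,i-1$; $V_n$ the span of those labeled $1,\dots,n$; and $x^{-1}V_{i-1}$ is the span of the first-column vectors together with the second-column vectors $e_{n+\ell}$ whose left neighbor $e_{1+\ell}$ lies in $V_{i-1}$.

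Next I would impose the three conditions of \eqref{eq:KiDescrip} one at a time on such a filling. The condition $\im(x)\subseteq V_n$ says every first-column basis vector appears among the cells labeled $\le n$; combined with ``all cells in the first column are labeled'' from the GLW description, and the count that exactly $n$ cells are labeled, this forces all $n-1$ first-column cells plus exactly one second-column cell to be labeled — so a permutation flag in $K^i$ has the first column fully labeled and exactly one labeled cell in the second column, call its label $b$. The condition $V_{i-1}\subseteq\im(x)$ says the cells labeled $1,\dots,i-1$ all lie in the first column, i.e.\ $b\ge i$; here one must be slightly careful because if $b$'s cell has a cell to its left, that left cell must itself be labeled (no empty cell left of a labeled cell), so the left neighbor carries a label too — this is exactly case (1), a row with two letters $a,b$; whereas if $b$'s cell is in a row with no first-column partner (the rows in the pure second-column part $(1^{s-n+1})$ on top), that is case (2). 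Finally the condition $V_n\subseteq x^{-1}V_{i-1}$, equivalently $xV_n\subseteq V_{i-1}$, need only be checked on the single second-column generator $e_{w}$ labeled $b$: its image $x e_{w}$ is the left-neighbor basis vector, which lies in $V_{i-1}$ iff that left neighbor is labeled with some $a\le i-1$, i.e.\ $a<i$. In case (2) there is no left neighbor, so $x e_w=0\in V_{i-1}$ automatically and the only surviving constraint is $b\ge i$. Assembling: case (1) is exactly ``a unique two-letter row with entries $a,b$, $a<i\le b$'' and case (2) is exactly ``the unique second-column letter $b$ has no cell to its left and $i\le b$,'' which is the claimed characterization.

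For the converse direction of the bijection I would check that any row-increasing partial filling of type (1) or (2) does lie in $K^i$: in both cases the first column is fully labeled (all $n-1$ cells), exactly one second-column cell is labeled, the cells labeled $1,\dots,i-1$ are forced into the first column by $a<i$ (case 1) or vacuously (case 2, since only one of the $n$ labels lies outside the first column and it is $b\ge i$), and the single second-column image lands in $V_{i-1}$ by the inequality on $a$ — so all three conditions of \eqref{eq:KiDescrip} hold and the flag is in $K^i=Z^i$ by Theorem~\ref{thm:ComponentDescrip}. The main obstacle, modest but real, is the bookkeeping around ``no empty cell to the left of a labeled cell'' together with the pair/no-pair dichotomy of rows: one has to argue cleanly that the labeled second-column cell is either genuinely paired with a labeled first-column cell (giving a true two-letter row, case 1) or sits in the top skew strip $(1^{s-n+1})$ with nothing to its left (case 2), and that these exhaust the possibilities; and one should note the degenerate case $s=n-1$, where the skew strip is empty so only case (1) occurs and, consistently with the earlier remark, $K^1$ does not arise. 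I expect this to be routine once the correspondence between the four subspaces and the positions of the labels is set up carefully.
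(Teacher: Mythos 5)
Your proposal is correct and follows essentially the same route as the paper: both translate the three conditions $V_{i-1}\subseteq\im(x)\subseteq V_n\subseteq x^{-1}V_{i-1}$ of Theorem~\ref{thm:ComponentDescrip} into statements about which cells carry the labels $1,\dots,i-1$ and the single second-column label $b$, check that $xV_n\subseteq V_{i-1}$ reduces to the left neighbor of $b$'s cell being labeled $a<i$ (automatic when there is no left neighbor), and verify both directions in the two cases. The bookkeeping you flag as the main obstacle is handled in the paper exactly as you outline it, so no gap remains.
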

\begin{proof}
    Let $V_\bullet$ be a permutation flag in $Y_{n,(1^{n-1}),s}$ corresponding to a partial permutation $w$, meaning that $V_i =\vspan\{e_{w_1},\dots,e_{w_i}\}$ for all $i$. For case (1), suppose $w$ corresponds to a row-increasing filling $T$ whose unique row containing two letters is filled with $a<b$. It suffices to show that $V_\bullet\in K^i$ if and only if $a<i\leq b$.

    If $V_\bullet \in K^i$, then by the definition of $K^i$, $V_{i-1} \subseteq \im(x) \subseteq V_n \subseteq x^{-1}V_{i-1}$. If $i > b$, then $b \leq i-1$ and so $e_{w_b}\in \im(x) = \vspan\{e_1,\dots, e_{n-1}\}$, which means that the label $b$ must be in the first column of $T$, a contradiction since it must be to the right of $a$. It follows that $i \leq b$.
    
    The space $V_a$ is spanned by $\{e_{w_1},\dots,e_{w_a}\}$ and $V_b$ is spanned by $\{e_{w_1},\dots,e_{w_b}\}$. Since $b$ is in the same row as $a$ (and to the right of $a$), then $w_b = w_a + (n - 1)$. It follows from the fact that $xV_n \subseteq V_{i-1}$ that $xV_b \subseteq V_{i-1}$. Thus $xe_{w_b} = e_{w_a} \in V_{i-1}$. Since $V_{i-1}$ is spanned by $\{e_{w_1},\dots, e_{w_{i-1}}\}$, then we must have that $a\leq i-1$ so $a < i$. This then shows that $a<i\leq b$.
    
    Now we suppose $a < i \leq b$ and show $V_\bullet \in K^{i}$. Indeed, $i\leq b$ implies $i-1<b$. Since $b$ is the only label in the second column of $T$, then the labels $1,\dots, i-1$ are in the first column of $T$, and thus $V_{i-1} \subseteq \im(x)$. Furthermore, since $V_\bullet$ is a permutation flag, $b$ is the unique label in the second column of $T$, and $xe_{w_b}=e_{w_a}$, we have $xV_n = xV_b\subseteq V_a$ and it follows from $a<i$ that $V_a\subseteq V_{i-1}$, and so $xV_n\subseteq V_{i-1}$. Equivalently, $V_n \subseteq x^{-1}V_{i-1}$, and thus $V_\bullet\in K^i$.

    We conclude that $V_\bullet\in K^i$ if and only if $a < i \leq b$, proving the corollary for case (1).

    Alternatively, for case (2), let $w$ correspond instead to a row-increasing filling such that the unique cell in the second column is filled with $b$, and there is no cell to its left. Then the condition that the first column of $T$ is entirely filled is equivalent to the condition that $\im(x)\subseteq V_n$, and the condition that $i\leq b$ is equivalent to $V_{i-1}\subseteq \im(x)$. Since $b$ has no cell to its left, then $V_n\subseteq x^{-1}V_{i-1}$ is automatic from $xe_{w_b}=0$.
\end{proof}

\begin{figure}
    \centering
    \ytableausetup{mathmode, boxframe=normal, boxsize=normal}
        \begin{ytableau}1&4\\2 & \\ 3 & \end{ytableau}
        \,
        \begin{ytableau}1&4\\3 & \\ 2 & \end{ytableau}
        \,
        \begin{ytableau}2&\\ 1&4 \\ 3 & \end{ytableau}
        \,
        \begin{ytableau}3&\\ 1&4 \\ 2 & \end{ytableau}
        \,
        \begin{ytableau}3 & \\2 & \\ 1&4 \end{ytableau}
        \,
        \begin{ytableau}2 & \\3 & \\ 1&4 \end{ytableau}
        \,

        \vspace{0.5cm}

        \begin{ytableau}2 & 4\\ 1& \\ 3 & \end{ytableau}
        \,
        \begin{ytableau}2 & 4\\ 3& \\ 1 & \end{ytableau}
        \,
        \begin{ytableau}1&\\2 &4 \\ 3 & \end{ytableau}
        \,
        \begin{ytableau}3&\\2 &4 \\ 1 & \end{ytableau}
        \,
        \begin{ytableau}1& \\ 3 & \\2 &4 \end{ytableau}
        \,
        \begin{ytableau}3& \\ 1 & \\2 &4 \end{ytableau}
        \,
        
        \vspace{0.5cm}
        
        \begin{ytableau}2 & 3\\1 & \\ 4 & \end{ytableau}
        \,
        \begin{ytableau}2 & 3\\4 & \\ 1 & \end{ytableau}
        \,
        \begin{ytableau}1&\\2 & 3 \\ 4 & \end{ytableau}
        \,
        \begin{ytableau}4&\\2 & 3 \\ 1 & \end{ytableau}
        \,
        \begin{ytableau}1&\\4& \\ 2 & 3 \end{ytableau}
        \,
        \begin{ytableau}4&\\1& \\ 2 & 3 \end{ytableau}
        \,

        \vspace{0.5cm}

        \begin{ytableau}1&3\\2 & \\ 4 & \end{ytableau}
        \,
        \begin{ytableau}1&3\\4 & \\ 2 & \end{ytableau}
        \,
        \begin{ytableau}2 &\\1&3 \\ 4 & \end{ytableau}
        \,
        \begin{ytableau}4 &\\1&3 \\ 2 & \end{ytableau}
        \,
        \begin{ytableau}2 &\\4 & \\ 1&3 \end{ytableau}
        \,
        \begin{ytableau}4 &\\2 & \\ 1&3 \end{ytableau}
        \,
    \caption{The partial row-increasing fillings corresponding to the permutation flags in the irreducible component $K^{3}$ of $Y_{4,3}=Y_{4,(1^3),3}$.}
    \label{fig:Cells1}
\end{figure}

\begin{figure}
    \centering
        \begin{ytableau} \none & 3\\1 & \\ 2 & \end{ytableau}
        \,
        \begin{ytableau} \none & 3\\2 & \\ 1 & \end{ytableau}
        \,
        \begin{ytableau} \none & 2\\1 & \\ 3 & \end{ytableau}
        \,
        \begin{ytableau} \none & 2\\3 & \\ 1 & \end{ytableau}
        
        \vspace{0.5cm}
        
        \begin{ytableau} \none & \\1 & 2\\ 3 & \end{ytableau}
        \,
        \begin{ytableau} \none & \\1 & 3\\ 2 & \end{ytableau}
        \,
        \begin{ytableau} \none & \\2 & \\ 1 & 3\end{ytableau}
        \,
        \begin{ytableau} \none & \\3 & \\ 1 & 2\end{ytableau}
    \caption{The partial row-increasing fillings corresponding to permutation flags in the irreducible component $K^2$ of $Y_{3,(1^2),3}$.}
    \label{fig:Cells2}
\end{figure}

\subsection{Intersections of irreducible components}
 For $1\leq i<j\leq n$ in the case $s>n-1$, or $2\leq i<j\leq n$ when $s=n-1$, let
\[
K^{i,j}\coloneqq K^i\cap K^{j}.
\]
By Theorem~\ref{thm:ComponentDescrip}, we then have
\begin{equation}\label{eq:IntersectionDescrip}
K^{i,j} = \{V_\bullet\in \Fl(1^n,s-1)\mid V_{j-1}\subseteq \im(x) \subseteq V_n\subseteq x^{-1}V_{i-1}\}.
\end{equation}

\begin{proposition}\label{prop:Intersections are bundles}
    Let $b_1< b_2<\cdots < b_m\leq n$. Then
    \[
        K^{b_1}\cap \cdots \cap K^{b_m} = K^{b_1,b_m},
    \]
    which is an iterated Grassmannian bundle of type $\Gr(b_m-1,n-1)$, $\Fl(1^{b_m-1})$, $\mathbb{P}^{s+b_1-n-1}$, $\Fl(1^{n-b_m+1})$, which is of dimension $\binom{n-1}{2} + s - (b_m-b_1)$.
\end{proposition}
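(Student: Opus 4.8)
The plan is to argue in two stages: first reduce the $m$-fold intersection to the pairwise intersection $K^{b_1,b_m}$, and then analyze $K^{b_1,b_m}$ directly, following the ``a flag is determined by a sequence of independent choices'' argument used for $Z^i$ in the proof of Lemma~\ref{lem:IteratedGrass}.

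For the reduction, I would use the description of each component in Theorem~\ref{thm:ComponentDescrip}: a flag $V_\bullet$ lies in $K^{b_\ell}$ exactly when $V_{b_\ell-1}\subseteq\im(x)\subseteq V_n\subseteq x^{-1}V_{b_\ell-1}$, the outermost condition being equivalent to $xV_n\subseteq V_{b_\ell-1}$. Since $V_\bullet$ is a flag we have $V_{b_1-1}\subseteq\cdots\subseteq V_{b_m-1}$, so across all $\ell$ the conditions $V_{b_\ell-1}\subseteq\im(x)$ are all implied by $V_{b_m-1}\subseteq\im(x)$, the conditions $xV_n\subseteq V_{b_\ell-1}$ are all implied by $xV_n\subseteq V_{b_1-1}$, and $\im(x)\subseteq V_n$ is common to every $\ell$. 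Hence $\bigcap_{\ell=1}^m K^{b_\ell}=\{V_\bullet\mid V_{b_m-1}\subseteq\im(x)\subseteq V_n\subseteq x^{-1}V_{b_1-1}\}$, which is $K^{b_1,b_m}$ by \eqref{eq:IntersectionDescrip}.

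For the bundle structure, I would prove the more general statement that for $i\le j$ the variety $K^{i,j}=\{V_\bullet\mid V_{j-1}\subseteq\im(x)\subseteq V_n\subseteq x^{-1}V_{i-1}\}$ is an iterated Grassmannian bundle, by exhibiting the successive choices that determine $V_\bullet$: (1) choose $V_{j-1}\subseteq\im(x)$, a point of $\Gr(j-1,n-1)$ since $\dim\im(x)=n-1$; (2) choose a complete flag in $V_{j-1}$, a point of $\Fl(1^{j-1})$, which in particular fixes $V_{i-1}$ because $i-1\le j-1$; (3) choose $V_n$ with $\im(x)\subseteq V_n\subseteq x^{-1}V_{i-1}$; (4) choose $V_j\subseteq\cdots\subseteq V_{n-1}$, i.e.\ a complete flag in $V_n/V_{j-1}$, a point of $\Fl(1^{n-j+1})$. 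In step (3) one uses that the Jordan type of $x$ is $(2^{n-1},1^{s-n+1})$, so $x^2=0$ and $\im(x)\subseteq\ker(x)\subseteq x^{-1}V_{i-1}$, and that $\dim x^{-1}W=\dim W+\dim\ker(x)=\dim W+s$ for any $W\subseteq\im(x)$; hence $\dim\bigl(x^{-1}V_{i-1}/\im(x)\bigr)=(i-1)+s-(n-1)=s+i-n$ while $\dim(V_n/\im(x))=1$, so step (3) is the choice of a line, a point of $\bP\bigl(x^{-1}V_{i-1}/\im(x)\bigr)\cong\bP^{s+i-n-1}$. As in Example~\ref{ex:FlagBundle}, each of these maps is an algebraic fiber bundle. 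Taking $i=b_1$, $j=b_m$ gives the asserted type $\Gr(b_m-1,n-1)$, $\Fl(1^{b_m-1})$, $\bP^{s+b_1-n-1}$, $\Fl(1^{n-b_m+1})$, and the dimension is the sum of the four fiber dimensions: this is exactly the computation in the proof of Lemma~\ref{lem:IteratedGrass} with the sole change that the projective factor contributes $s+b_1-n-1$ instead of $s+b_m-n-1$, so the total is $b_m-b_1$ less than $\dim Y_{n,(1^{n-1}),s}$, i.e.\ $\binom{n-1}{2}+(s-1)-(b_m-b_1)$.

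The genuinely new step is the reduction, and it is essentially immediate given Theorem~\ref{thm:ComponentDescrip}, so I do not expect a serious obstacle there. The point needing the most care is step (3) of the second stage: one must verify that $V_{i-1}$ is already pinned down by steps (1)--(2), that $\im(x)$ really lies in $x^{-1}V_{i-1}$ (this is where $x^2=0$ is used), and that $\dim x^{-1}V_{i-1}$ does not depend on any of the earlier choices, so that step (3) is a genuine projective bundle of rank $s+i-n-1$ over the base built in steps (1)--(2). One should also record the degenerate ranges --- $m=1$ is vacuous; $b_1\ge 2$ is required when $s=n-1$ so that $K^{b_1}$ is defined; the factor $\Fl(1^{n-b_m+1})$ is a point when $b_m=n$; and the projective factor is a point when $s+b_1-n-1=0$ --- but each is the obvious analogue of what is already checked for $Z^i$ in Lemma~\ref{lem:IteratedGrass} and Lemma~\ref{lem:Smoothsubspace}.
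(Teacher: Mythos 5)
Your proof is correct and follows essentially the same route as the paper's: reduce the $m$-fold intersection to $K^{b_1,b_m}$ via the description \eqref{eq:IntersectionDescrip}, then rerun the choice-by-choice analysis of Lemma~\ref{lem:IteratedGrass} with the Grassmannian and flag factors governed by $b_m$ and the projective factor by $b_1$, and sum the fiber dimensions. One remark: the dimension you obtain, $\binom{n-1}{2}+(s-1)-(b_m-b_1)$, is the correct evaluation of that sum --- it agrees with Theorem~\ref{thm:YDim} when $b_1=b_m$ and with the degree of the Poincar\'e polynomial in Proposition~\ref{prop:Poincare-ij} --- so the formula $\binom{n-1}{2}+s-(b_m-b_1)$ appearing in the statement (and in the last line of the paper's own computation) is off by one, not your argument.
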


\begin{proof}
    The first claim follows from~\ref{eq:IntersectionDescrip}. The description as an iterated Grassmannian bundle then follows by a similar analysis as in the proof of Lemma~\ref{lem:IteratedGrass}. The dimension follows from the following calculation. The total dimension is the sum of the dimensions of the fibers, 
    \begin{align*}
        &\dim(\Gr(b_m-1,n-1))+\dim(\Fl(1^{b_m-1}))+\dim(\mathbb{P}^{s+b_1-n-1})+ \dim(\Fl(1^{n-b_m+1}))\\
        &=(b_m-1)(n-b_m) + \binom{b_m-1}{2} + (s+b_1-n-1) + \binom{n-b_m+1}{2}\\
        %=&(n-1)b_m - b_m^2 + b_m - (n-1) + b_m - 1 + \frac{(b_m - 1)(b_m - 2)}{2} + (b_1 - 2) + \frac{(n - b_m + 1)(n - b_m)}{2}\\
        %=& kb_m - {b_m}^2 + 2b_m - k - 1 + b_1 - 2 + \frac{2{b_m}^2 - 6b_m + 4 + k^2 - 2kb_m + 3k}{2}\\
        %=& kb_m - {b_m}^2 + 2b_m - k - 1 + b_1 - 2 + {b_m}^2 - 3b_m + 2 + \frac{k^2}{2} - kb_m + \frac{3k}{2} \\
        %=& b_1-b_m-1+\frac{(n-1)^2}{2}+\frac{n-1}{2} \\
        %=& b_1-b_m - 1 +\frac{k(k+1)}{2} \\
        %=& b_1-b_m - 1 + {{k+1}\choose2} \\
        &= \binom{n-1}{2}+s - (b_m - b_1).
    \end{align*}
\end{proof}

\begin{remark}\label{rmk:RootPoset}
    By Proposition~\ref{prop:Intersections are bundles}, we see that the poset formed by the set of intersections of irreducible components of $Y_{n,n-1}=Y_{n,(1^{n-1}),n-1}$ under reverse inclusion is isomorphic to the type $A_{n-1}$ root lattice. Indeed, the type $A_{n-1}$ root lattice is the poset whose elements are the positive roots $\alpha_{i,j} = e_i-e_{i+1}$, with $i<j$, where $e_i$ is the $i^{th}$ coordinate vector in $\mathbb{R}^n$. We have $\alpha_{i,j}\geq \alpha_{k,\ell}$ in the root lattice if and only if  $i\leq k<\ell\leq j$. Thus, we have a poset isomorphism given by associating the component intersection $K^{i,j}$ with the root $\alpha_{i-1,j-1}$. 
    
    Similarly, for $s>n-1$ the set of intersections of irreducible components  of $Y_{n,(1^{n-1}),s}$ under reverse inclusion is isomorphic to the $A_n$ root lattice.
\end{remark}

\begin{example}
For example, the poset of irreducible component intersections of $Y_{4,3}$ is
\[
\begin{tikzcd}
    &&K^{2}\cap K^{4}&&\\
    &K^{2}\cap K^{3}\arrow[dash,ru] && K^{3}\cap K^{4}\arrow[dash,lu]&\\
    K^{2}\arrow[dash,ru] && K^{3}\arrow[dash,lu]\arrow[dash,ru] && K^{4}.\arrow[dash,lu]
\end{tikzcd}
\]
\end{example}

\section{Poincar\'e polynomials and Dyck paths}\label{sec:Poincare}

In this section, we give formulas for the Poincar\'e polynomials of unions of intersections of irreducible components of $Y_{n,n-1}$ in terms of arm and leg statistics on Dyck paths. All of the results in this section can easily be extended to the setting of $Y_{n,(1^{n-1}),s}$ for $s>n-1$; for simplicity of notation we leave this to the reader and state our results only in the case $Y_{n,n-1}$.

We first state a formula for the Poincar\'e polynomial of an irreducible intersection $K^{i,j}$ in $Y_{n,n-1}$. We then give a formula for the Poincar\'e polynomial of an arbitrary union of $K^{i,j}$. 

\begin{proposition}\label{prop:Poincare-ij}
For $2\leq i<j\leq n$, 
\[
\Poin(K^{i,j};\sqrt{q}) = [n-1]_q! [i-1]_q[n-j+1]_q.
\]
\end{proposition}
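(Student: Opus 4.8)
The plan is to compute $\Poin(K^{i,j};\sqrt q)$ by using the iterated Grassmannian bundle structure already established in Proposition~\ref{prop:Intersections are bundles} together with the multiplicativity of Poincar\'e polynomials for fiber bundles recorded in \eqref{eq:IteratedPoincare}. Since we are in the case $s = n-1$, the bundle $K^{i,j}$ has type $\Gr(j-1,n-1)$, $\Fl(1^{j-1})$, $\mathbb{P}^{i-n-1+(n-1)} = \mathbb{P}^{i-2}$, $\Fl(1^{n-j+1})$. (Here I am specializing $b_1 = i$, $b_m = j$, $s = n-1$ in the statement of Proposition~\ref{prop:Intersections are bundles}; note $s + b_1 - n - 1 = i - 2$.) Applying \eqref{eq:IteratedPoincare} together with the known Poincar\'e polynomials \eqref{eq:FlagPoincare}, \eqref{eq:GrassPoincare} for the flag variety and Grassmannian, and $\Poin(\mathbb{P}^m;\sqrt q) = [m+1]_q$, I would write
\[
\Poin(K^{i,j};\sqrt q) = \multibinom{n-1}{j-1}_q \cdot [j-1]_q! \cdot [i-1]_q \cdot [n-j+1]_q!.
\]

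The second step is the elementary simplification of this product. Expanding the $q$-binomial gives $\multibinom{n-1}{j-1}_q = \dfrac{[n-1]_q!}{[j-1]_q!\,[n-j]_q!}$, so the $[j-1]_q!$ factors cancel and we are left with
\[
\Poin(K^{i,j};\sqrt q) = \frac{[n-1]_q!}{[n-j]_q!}\,[i-1]_q\,[n-j+1]_q! = [n-1]_q!\,[i-1]_q\,\frac{[n-j+1]_q!}{[n-j]_q!} = [n-1]_q!\,[i-1]_q\,[n-j+1]_q,
\]
which is exactly the claimed formula. So the argument is essentially a one-line bundle computation followed by cancellation.

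The only genuine point requiring care — and the step I would treat as the "main obstacle," though it is minor — is getting the exponent of the projective space factor right when specializing $s = n-1$, since the general statement is phrased for $s \geq n-1$ and one must check that $\mathbb{P}^{s + b_1 - n - 1}$ becomes $\mathbb{P}^{i-2}$ and that this is a genuine (nonnegative-dimensional, possibly a point) projective space for all $i$ in the allowed range $2 \leq i$; when $i = 2$ this factor is $\mathbb{P}^0$, a point, contributing $[1]_q = 1$, consistent with the formula $[i-1]_q = [1]_q = 1$. I would also remark that the same computation, carried out without specializing $s$, yields $\Poin(K^{i,j};\sqrt q) = [n-1]_q!\,[i-1]_q\,[n-j+1]_q \cdot \text{(a correction term in } s)$ — but since the proposition is only stated for $Y_{n,n-1}$ I would simply note in passing that the $s > n-1$ case is analogous and leave it to the reader, as the section preamble already does.
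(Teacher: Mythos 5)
Your proposal is correct and follows essentially the same route as the paper: invoke Proposition~\ref{prop:Intersections are bundles} for the bundle type (the paper writes the projective-space factor as $\Gr(1,i-1)\cong\mathbb{P}^{i-2}$, which matches your specialization $s=n-1$), apply the multiplicativity \eqref{eq:IteratedPoincare}, and cancel the $[j-1]_q!$ against the $q$-binomial. No gaps.
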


\begin{proof}
Given $2\leq i<j\leq n$, by Proposition~\ref{prop:Intersections are bundles},  $K^{i,j}$ is an iterated Grassmannian bundle of type $\Gr(j-1,n-1)$, $\Gr(1,i-1)$, $\Fl(1^{j-1})$, $\Fl(1^{n-j+1})$. Therefore, by \eqref{eq:IteratedPoincare},
\begin{align*}
    \Poin(K^{i,j}, \sqrt{q}) &= \multibinom{n-1}{j-1}_{q} \cdot [i-1]_{q} \cdot [j-1]_{q}! \cdot [n-j+1]_{q}! \\
    &= [n-1]_{q}! \cdot [i-1]_{q} \cdot [n-j+1]_{q}.
\end{align*}
\end{proof}

Given a cell in the $n\times n$ grid, we will label the cell in the $i^{th}$ column from the left and $j^{th}$ row from the bottom by $(i+1,j)$ as in the left side of Figure~\ref{fig:DyckPath} 

\begin{remark}
    The reasoning for this unconventional labeling with $i+1$ instead of $i$ is to match the indexing of the $K^{i,j}$ starting with $i= 2$. When working with $Y_{n,(1^{n-1}),s}$ for $s>n-1$, this must be replaced with the $(n+1)\times (n+1)$ grid in which the cell in the $i^{th}$ column from the left and the $j^{th}$ row from the bottom is labeled by $(i,j-1)$ for $j>1$.
\end{remark}

Let $D$ denote a \textbf{Dyck path} in the $n\times n$ grid, which is a lattice path starting from the lower left corner taking only North and East unit steps that stays weakly above the diagonal. We may alternatively identify $D$ with the set of cells in the grid that lie \textit{above} the Dyck path $D$. Given a cell $c \in D$ lying above the path with coordinates $(i,j)$, its \textbf{arm} $a(c)=i-2$ is the number of cells strictly to the left of $c$ in the grid, and its \textbf{leg} $\ell(c)=n-j$ is the number of cells strictly above $c$ in the grid.

\begin{theorem}\label{thm:Poincare}
    Let $(i_1,j_1),\dots, (i_m,j_m)$ be a sequence of pairs $2\leq i_t<j_t\leq n$. We have
    \begin{equation}\label{eq:DyckPoincare}
    \Poin\left(\bigcup_{t=1}^m K^{i_t,j_t};\sqrt{q}\right) = 
    [n-1]_q!\sum_{c\in D} q^{a(c)+\ell(c)},
    \end{equation}
    where $D$ is the Dyck path formed by the union of cells weakly above and to the left of some $(i_r,j_r)$.
\end{theorem}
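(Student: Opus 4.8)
The plan is to build a suitable filtration of the union $\bigcup_{t=1}^m K^{i_t,j_t}$ by closed subvarieties whose successive differences are vector bundles over iterated Grassmannian bundles, so that Borel--Moore homology is concentrated in even degrees and~\eqref{eq:FiltrationPoincare} applies term by term. First I would reduce to the Dyck path $D$: by Proposition~\ref{prop:Intersections are bundles} and Remark~\ref{rmk:RootPoset}, the poset of component intersections is the $A_{n-1}$ root lattice, and $K^{i,j}\subseteq K^{i',j'}$ iff $i'\le i<j\le j'$, i.e.\ iff the cell $(i',j')$ lies weakly above and to the left of $(i,j)$. Hence $\bigcup_t K^{i_t,j_t} = \bigcup_{(i,j)\in D} K^{i,j}$ where $D$ is the set of cells weakly northwest of some $(i_r,j_r)$; that this set of cells is exactly the region above a Dyck path is the combinatorial fact that a northwest-closed subset of the staircase is a Dyck path region (this also forces $i<j$ throughout, using $2\le i_t<j_t\le n$). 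So it suffices to prove~\eqref{eq:DyckPoincare} with the sum ranging over $c\in D$.

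Next I would set up the filtration. Order the cells $c\in D$ as $c_1,\dots,c_N$ so that $c_r=(i_r,j_r)$ and every cell northwest of $c_r$ has smaller index (a linear extension of the northwest order). Set $F_r \coloneqq \bigcup_{t\le r} K^{i_t,j_t}$, a closed subvariety, with $F_0=\emptyset$ and $F_N=\bigcup_t K^{i_t,j_t}$. The key claim is that the open stratum $U_r \coloneqq F_r\setminus F_{r-1}$ has Borel--Moore homology free and concentrated in even degrees, with Poincaré polynomial $\Poin(U_r;\sqrt q) = [n-1]_q!\, q^{a(c_r)+\ell(c_r)}$. Granting this, the long exact sequence argument recalled in Subsection on Borel--Moore homology gives $H_*^{BM}(F_N) = \bigoplus_r H_*^{BM}(U_r)$ and~\eqref{eq:DyckPoincare} follows immediately by summing. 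Note the identity $a(c)+\ell(c) = (i-2)+(n-j) = (i-1) + (n-j+1) - 2$, so $[n-1]_q!\,q^{a(c)+\ell(c)}$ is, up to the shift built into writing the answer in $\sqrt q$, the "leading part" of the factor $[n-1]_q![i-1]_q[n-j+1]_q$ appearing in Proposition~\ref{prop:Poincare-ij}; the lower-order terms of that product get absorbed into strata with smaller-index cells. This is the conceptual heart: each $K^{i,j}$ contributes, after removing what it shares with components already counted, a single affine-bundle-like stratum over an iterated Grassmannian bundle.

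To prove the claim about $U_r$, I would use the explicit descriptions~\eqref{eq:KiDescrip} and~\eqref{eq:IntersectionDescrip}. Inside $K^{i_r,j_r}$, the subvarieties $K^{i_r,j_r}\cap K^{i_t,j_t} = K^{\min,\max}$ for $t<r$ are exactly the loci where some inclusion in the chain $V_{j_r-1}\subseteq \im(x)\subseteq V_n\subseteq x^{-1}V_{i_r-1}$ is tightened (either $V_{j_r}\subseteq\im(x)$ pushes down to $V_{j_r-1+\delta}\subseteq \im(x)$, or $xV_n\subseteq V_{i_r-1}$ tightens to $xV_n\subseteq V_{i_r-1-\delta}$, via the root-lattice order). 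Recalling the four-step construction in the proof of Lemma~\ref{lem:IteratedGrass}, only steps (1) and (3) — the choice of $V_{j_r-1}\subseteq\im(x)$, i.e.\ a point of $\Gr(j_r-1,n-1)$, and the choice of $V_n$, i.e.\ a point of $\mathbb P^{s+i_r-n-1}=\mathbb P^{i_r-2}$ when $s=n-1$ — are affected by these conditions; steps (2) and (4), the flag choices in $\Fl(1^{j_r-1})$ and $\Fl(1^{n-j_r+1})$, are unconstrained and contribute the factor $[n-1]_q!/\multibinom{n-1}{j_r-1}_q$ freely. Removing the sub-loci corresponding to $t<r$ from the $\Gr(j_r-1,n-1)$-factor and the $\mathbb P^{i_r-2}$-factor cuts them down, via the Schubert-cell decompositions of the Grassmannian and projective space, to the single top-dimensional open cells $\mathbb C^{(j_r-1)(n-j_r)}\times\mathbb C^{i_r-2}$ (this is precisely where the northwest-closure of $D$ and the linear extension ordering are used: every intersection with an earlier cell removes a Schubert subvariety, and their union is the complement of the big cell). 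Thus $U_r$ fibers, through the unchanged steps, as an iterated bundle whose base is this product of affine cells and whose fibers are the two honest flag varieties $\Fl(1^{j_r-1})$ and $\Fl(1^{n-j_r+1})$; its Poincaré polynomial is $q^{(j_r-1)(n-j_r)+i_r-2}\cdot[j_r-1]_q!\,[n-j_r+1]_q!$. A short check reconciling this exponent with $a(c_r)+\ell(c_r)=i_r-2+n-j_r$ — the difference $(j_r-1)(n-j_r) + (j_r-1) = \binom{j_r-1}{2}$-type term is exactly what combines $[j_r-1]_q![n-j_r+1]_q!$ with a $q$-power into $[n-1]_q!$ — completes the computation.

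The main obstacle, and the step deserving the most care, is the second half of the last paragraph: proving rigorously that $F_{r-1}\cap K^{i_r,j_r}$, realized inside the iterated-bundle presentation of $K^{i_r,j_r}$, pulls back to the complement of the big Schubert cell in the $\Gr(j_r-1,n-1)\times\mathbb P^{i_r-2}$ base — and in particular that it is genuinely a union of Schubert-type subvarieties compatible with a cell paving, so that $U_r$ really does have an affine paving (hence even, free Borel--Moore homology). One must check that the flag conditions cutting out each earlier $K^{i_t,j_t}$ inside $K^{i_r,j_r}$ translate, under steps (1)--(4), into conditions purely on the $V_{j_r-1}$ and $V_n$ coordinates of a Schubert-cell–compatible form; this should follow from writing everything in the matrix coordinates of Proposition~\ref{prop:ComponentContain}, but the bookkeeping across the two different linear-order indices (the root-lattice/northwest order on cells versus the Schubert order within each Grassmannian factor) is where an error could hide. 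An alternative, possibly cleaner, route that sidesteps some of this is to prove the formula by inclusion--exclusion: since every finite intersection $\bigcap K^{i_t,j_t}$ is again some $K^{b_1,b_m}$ with a known Poincaré polynomial (Proposition~\ref{prop:Intersections are bundles} and~\ref{prop:Poincare-ij}), and since all the spaces involved have affine pavings, one could in principle compute $\Poin(\bigcup_t K^{i_t,j_t};\sqrt q)$ by an Euler-characteristic-style alternating sum and then show combinatorially that the result collapses to $[n-1]_q!\sum_{c\in D}q^{a(c)+\ell(c)}$; I expect the filtration argument above to be shorter, but I would keep inclusion--exclusion in reserve as a cross-check.
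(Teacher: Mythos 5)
Your global strategy coincides with the paper's: reduce to the northwest-closed set of cells $D$, filter $\bigcup_t K^{i_t,j_t}$ by closed subvarieties following a linear extension of the root-lattice order of Remark~\ref{rmk:RootPoset}, and observe that the successive differences are exactly the strata $K^{i,j}\setminus(K^{i-1,j}\cup K^{i,j+1})$, whose Poincar\'e polynomials are then summed via the Borel--Moore long exact sequence. (A minor slip: your containment criterion is reversed; the correct statement is $K^{i,j}\subseteq K^{i',j'}$ iff $i\le i'<j'\le j$, i.e.\ iff the cell $(i,j)$ lies weakly above and to the left of $(i',j')$. The union equality you want still holds, because the cells added to form $D$ index deeper intersections contained in the given components.) The genuine gap is in your computation of each stratum.

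Your claim that only steps (1) and (3) of the iterated-bundle presentation of $K^{i,j}$ from Lemma~\ref{lem:IteratedGrass} are affected, so that removing the earlier components amounts to deleting Schubert-type loci from the $\Gr(j-1,n-1)$ and $\bP^{i-2}$ factors while the flag fibers of steps (2) and (4) survive intact, is false. Inside $K^{i,j}$ the locus $K^{i,j+1}$ is $\{V_j\subseteq\im(x)\}$, a condition on the step-(4) choice of $V_j$; it surjects onto the Grassmannian factor and meets every fiber, so it is not the preimage of any proper subvariety of the base factors. Similarly $K^{i-1,j}$ is $\{V_n\subseteq x^{-1}V_{i-2}\}$, which couples the step-(3) choice of $V_n$ with the step-(2) choice of $V_{i-2}$. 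Consequently the stratum is not a $\Fl(1^{j-1})\times\Fl(1^{n-j+1})$-bundle over a product of big cells, and your proposed answer $q^{(j-1)(n-j)+i-2}\,[j-1]_q!\,[n-j+1]_q!$ is wrong: at $q=1$ it gives $(j-1)!\,(n-j+1)!$, while the correct value is $(n-1)!$ (they agree only for $j\in\{2,n\}$), so no power of $q$ can turn $[j-1]_q!\,[n-j+1]_q!$ into $[n-1]_q!$ and the ``short check'' you defer cannot succeed. The paper avoids this by re-fibering the stratum (Lemma~\ref{lem:SetDiffBundle}): on the stratum $V_j\not\subseteq\im(x)$ forces $\dim(V_\ell\cap\im(x))=\ell-1$ for $\ell>j$, so $V_\bullet\mapsto(V_1,\dots,V_{j-1},V_{j+1}\cap\im(x),\dots,V_n\cap\im(x))$ maps it to the full flag variety $\Fl(\im(x))\cong\Fl(1^{n-1})$, with fiber the choice of $v_j$ in $\bP(x^{-1}V_{i-1}/V_{j-1})\setminus\bP(x^{-1}V_{i-2}/V_{j-1})\cong\bC^{\,n-2-(j-i)}$; this exhibits the stratum as a vector bundle over $\Fl(1^{n-1})$ and yields $[n-1]_q!\,q^{a(c)+\ell(c)}$ directly. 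To repair your argument you need this (or an equivalent) change of fibration; the inclusion--exclusion fallback you mention is plausible in principle but is not carried out.
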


\begin{example}
    For $n=5$, $K^{23}\cup K^{44}$ corresponds to the Dyck path drawn in blue in the $5\times 5$ grid in Figure~\ref{fig:DyckPath}. On the left, we have labeled cells by their coordinates and on the right we have labeled them by their $q$ power contribution to \eqref{eq:DyckPoincare}.
    So, the Poincar\'e polynomial has the formula
    \begin{align*}
        \Poin(K^{23} \cup K^{44};\sqrt{q}) &= [4]_q!\,(1+2q+3q^2+q^3)\\
        &= q^9 + 6q^8 + 16q^7 + 28q^6 + 36q^5 + 35q^4 + 26q^3 + 14q^2 + 5q + 1.
%        \text{Poin}(K^{23}) &= [n-1]_{q}! \cdot [i-1]_{q} \cdot [n-j+1]_{q} \\
        %&= [n-1]! \cdot [i-1] \cdot [n-j+1] = [5-1]! \cdot [2-1] \cdot [5-3+1] = [4]![1][3] \\
        %\text{Poin}(K^{44}) &= [n-1]_{q}! \cdot [i-1]_{q} \cdot [n-j+1]_{q} \\
        %&= [n-1]! \cdot [i-1] \cdot [n-j+1] = [5-1]! \cdot [4-1] \cdot [5-4+1] = [4]![3][2] \\
        %\text{Poin}(K^{24}) &= [n-1]_{q}! \cdot [i-1]_{q} \cdot [n-j+1]_{q} \\
        %&= [n-1]! \cdot [i-1] \cdot [n-j+1] = [5-1]! \cdot [2-1] \cdot [5-4+1] = [4]![1][2] \\
        %\text{Therefore, }
        %\text{Poin}(K^{23} \cup K^{44}) &= \text{Poin}(K^{23}) + \text{Poin}(K^{44}) - \text{Poin}(K^{24}) \\
        %&= [4]! ([3] + [3][2] + [2])
    \end{align*}
\end{example}

\begin{figure}
    \centering
    \includegraphics[scale=0.6]{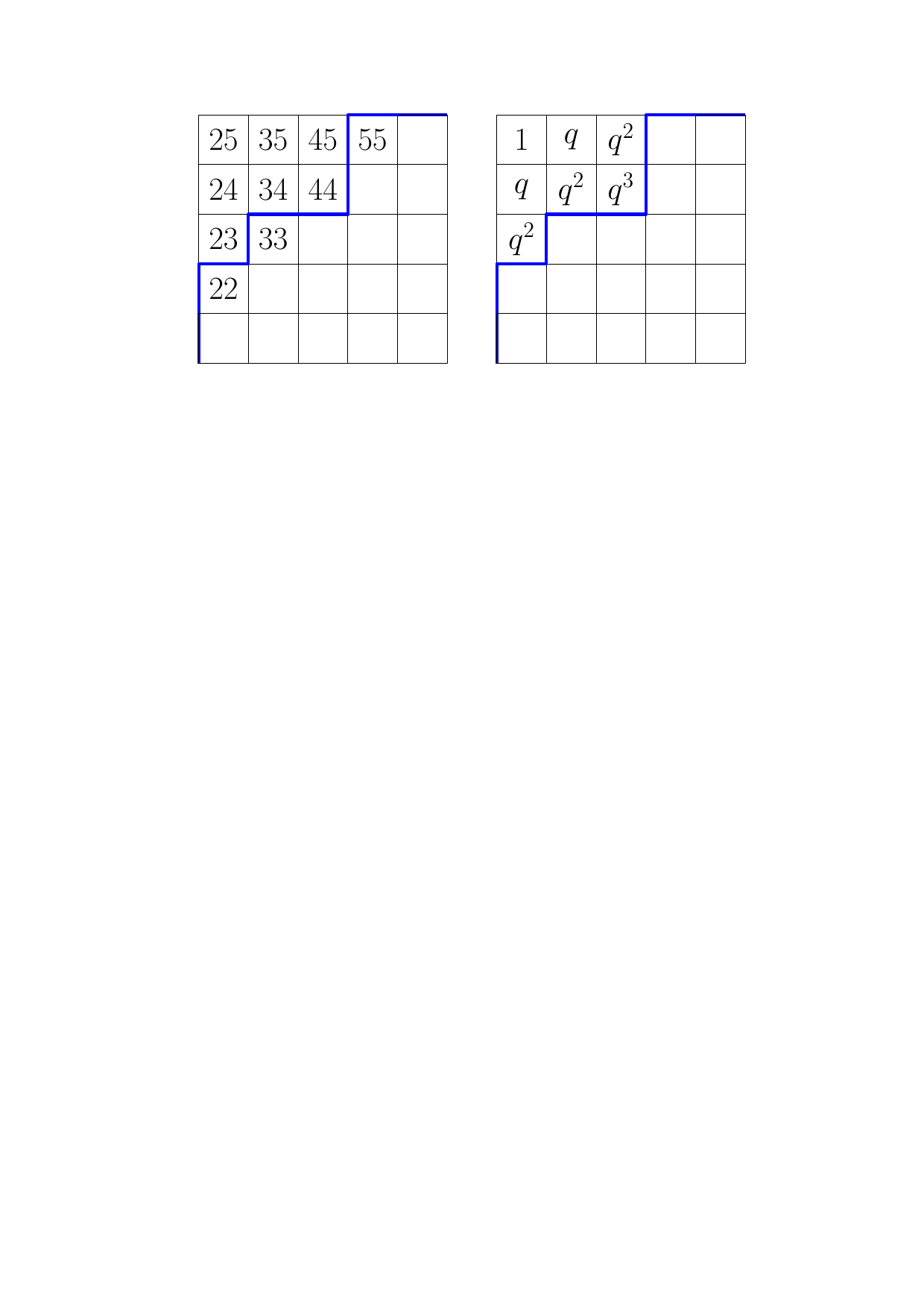}
    \caption{On the left, the Dyck path corresponding to $K^{23}\cup K^{44}$. On the right, the corresponding $q$ power contributions of the cells above the path to the Poincar\'e polynomial.}
    \label{fig:DyckPath}
\end{figure}

Our strategy is to filter the union $\bigcup_{t=1}^m K^{i_t,j_t}$ into subspaces whose successive differences are vector bundles over flag varieties and whose contribution to the Poincar\'e polynomial are the terms in the right-hand side of \eqref{eq:DyckPoincare}. Namely, we have the following Lemma.

\begin{lemma}\label{lem:SetDiffBundle}
    For $2\leq i<j\leq n$, the space $K^{i,j}\setminus \left(K^{i-1,j}\cup K^{i,j+1}\right)$ is a rank $(n-2-(j-i))$ vector bundle over $\Fl(1^{n-1})$. 
    Here, we interpret $K^{1,j}=K^{i,n+1}=\emptyset$.
\end{lemma}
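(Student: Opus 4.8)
The plan is to build an explicit morphism $\phi\colon \mathcal K \to \Fl(1^{n-1})$ and show it is a Zariski-locally trivial bundle with fibers affine spaces of dimension $n-2-(j-i)$. Here $\mathcal K := K^{i,j}\setminus(K^{i-1,j}\cup K^{i,j+1})$, and I take $\Fl(1^{n-1})$ to be the variety of complete flags in $N := \im(x)=\ker(x)$, which has dimension $n-1$ since $s=n-1$, identified with $\Fl(1^{n-1})$ via the basis $e_1,\dots,e_{n-1}$. By \eqref{eq:IntersectionDescrip}, $\mathcal K$ consists of those $V_\bullet \in \Fl(1^n,n-2)$ with $V_{j-1}\subseteq N\subseteq V_n\subseteq x^{-1}V_{i-1}$ together with $V_j\not\subseteq N$ and $xV_n\not\subseteq V_{i-2}$; when $j=n$ the first extra condition is automatic (as $\dim V_n>\dim N$), matching $K^{i,n+1}=\emptyset$, and when $i=2$ the second is automatic (as $\dim(xV_n)=1$; see below), matching $K^{1,j}=\emptyset$.

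First I would record the structure of a flag $V_\bullet\in\mathcal K$. From $N\subseteq V_n$ we get $\dim(xV_n)=\dim V_n-\dim(V_n\cap\ker x)=1$, so $L:=xV_n$ is a line, $V_n=x^{-1}L$ by a dimension count, and $L\subseteq V_{i-1}$, $L\not\subseteq V_{i-2}$. From $V_j\not\subseteq N$ one deduces $V_k+N=V_n$, hence $\dim(V_k\cap N)=k-1$, for all $j\le k\le n$; in particular $V_j\cap N=V_{j-1}$. Since also $V_k\cap N=V_k$ for $k\le j-1$, the chain $V_1\cap N\subseteq V_2\cap N\subseteq\cdots\subseteq V_n\cap N$ has dimensions $1,2,\dots,j-1,j-1,j,\dots,n-1$, with the repeat $V_j\cap N=V_{j-1}$ the only one; deleting it yields a complete flag in $N$, namely
\[
V_1\subseteq\cdots\subseteq V_{j-1}\subseteq (V_{j+1}\cap N)\subseteq\cdots\subseteq (V_{n-1}\cap N)\subseteq N,
\]
which I define to be $\phi(V_\bullet)$ (with the evident reading when $j$ is close to $n$). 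Verifying that $\phi$ is well defined — that the intersections $V_k\cap N$ assemble with $V_1,\dots,V_{j-1}$ into a genuine complete flag — is the conceptual heart of the argument, and it is exactly here that the two conditions cutting out $\mathcal K$ inside $K^{i,j}$ are essential.

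Next I would compute the fibers of $\phi$. Fix $W_\bullet=(W_1\subseteq\cdots\subseteq W_{n-1}=N)$. A flag $V_\bullet\in\phi^{-1}(W_\bullet)$ is reconstructed by: (i) $V_k=W_k$ for $k\le j-1$; (ii) a choice of line $L=xV_n$ that is a complement of $W_{i-2}$ in $W_{i-1}$, i.e.\ $L\in\bP(W_{i-1})\setminus\bP(W_{i-2})\cong\mathbb{A}^{i-2}$, which forces $V_n=x^{-1}L$; (iii) a choice of line $V_j/W_{j-1}\subseteq V_n/W_{j-1}$ avoiding $N/W_{j-1}$, i.e.\ in $\bP(V_n/W_{j-1})\setminus\bP(N/W_{j-1})\cong\mathbb{A}^{n-j}$, which forces $V_k=V_j+W_{k-1}$ for $j<k<n$. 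A direct check (dimension counts and the modular law) shows any such data yields a flag in $\mathcal K$ with $\phi(V_\bullet)=W_\bullet$, and conversely every fiber element arises this way. Hence the fiber is an $\mathbb{A}^{n-j}$-bundle over $\mathbb{A}^{i-2}$, so isomorphic to $\mathbb{A}^{(i-2)+(n-j)}=\mathbb{A}^{n-2-(j-i)}$; the $\mathbb{A}^{i-2}$ factor degenerates to a point when $i=2$ (then $L=W_1$), and step (iii) and the $\mathbb{A}^{n-j}$ factor degenerate when $j=n$.

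Finally I would upgrade this to a locally trivial bundle structure over the open cover of $\Fl(1^{n-1})$ by translates of the big cell relative to a fixed reference flag. Over such an open one trivializes the space of complements $L$ of $W_{i-2}$ in $W_{i-1}$ by writing $L$ as a graph into $W_{i-2}$, and one trivializes the space of admissible lines $V_j/W_{j-1}$ using the fixed splitting $\bC^{2n-2}=N\oplus M$ with $M=\langle e_n,\dots,e_{2n-2}\rangle$ and $x|_M\colon M\xrightarrow{\sim}N$: then $x^{-1}L=N\oplus M_L$ with $M_L\subseteq M$ the line over $L$, which supplies a distinguished complement of $N/W_{j-1}$ in $V_n/W_{j-1}$. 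Patching these exhibits $\phi$ as a rank $n-2-(j-i)$ vector bundle over $\Fl(1^{n-1})$ — in any case as a Zariski-locally trivial $\mathbb{A}^{n-2-(j-i)}$-fibration with smooth total space, which is all that the Poincaré-polynomial computations of the next section require. I expect the main obstacle to be the well-definedness/structural step above; the local-triviality patching is routine bookkeeping, as are the checks in the two degenerate cases $i=2$ and $j=n$.
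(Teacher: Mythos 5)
Your proof is correct and follows essentially the same route as the paper's: the same projection $\phi(V_\bullet)=(V_1,\dots,V_{j-1},V_{j+1}\cap\im(x),\dots,V_n\cap\im(x))$ onto $\Fl(\im(x))$, justified by the same observation that $\dim(V_\ell\cap\im(x))=\ell-1$ for $\ell>j$, with each fiber identified as an affine space of dimension $n-2-(j-i)$. The only cosmetic difference is that you parametrize the fiber in two steps (first the line $L=xV_n$ in $V_{i-1}$ off $V_{i-2}$, then the line $V_j/V_{j-1}$ in $x^{-1}L/V_{j-1}$ off $\im(x)/V_{j-1}$), whereas the paper chooses $v_j$ directly as a point of $\mathbb{P}(x^{-1}V_{i-1}/V_{j-1})\setminus\mathbb{P}(x^{-1}V_{i-2}/V_{j-1})\cong\mathbb{C}^{n-2-(j-i)}$; your extra attention to Zariski-local triviality and the affine-versus-vector-bundle distinction only makes explicit a point the paper treats briefly.
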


\begin{proof}
    By~\eqref{eq:IntersectionDescrip}, 
    \[
    K^{i,j}\setminus \left(K^{i-1,j}\cup K^{i,j+1}\right) = \{V_\bullet\mid V_{j-1}\subseteq \im(x) \subseteq V_n\subseteq x^{-1}V_{i-1},\, V_j\not\subseteq \im(x),\, V_n\not\subseteq x^{-1}V_{i-2}\}.
    \]
    Letting $v_1,\dots, v_n$ be representative spanning vectors for $V_\bullet$, then $v_j\notin \im(x)$, and $V_n = \im(x) + \mathrm{span}\{v_j\}$. Thus, $\dim(V_\ell \cap \im(x)) = \ell-1$ for $\ell > j$. 

    Define a map
    \[
    \phi: K^{i,j}\setminus \left(K^{i-1,j}\cup K^{i,j+1}\right) \to \Fl(\im(x))
    \]
    by $\phi(V_\bullet) = (V_1,\dots, V_{j-1},V_{j+1}\cap \im(x),\dots, V_n\cap \im(x))$. Given $V_\bullet' \in \Fl(\im(x))$, then the fiber over $V_\bullet'$ is the set of all $V_\bullet$ in the domain such that 
    \begin{gather*}
    V_1=V_1',\dots, V_{j-1}=V_{j-1}',\\
    V_{j+1}\cap \im(x) = V_j',\dots, V_n\cap \im(x) = V_{n-1}'.
    \end{gather*}
    Then the fiber over $V_\bullet'$ is determined by the choice of $v_j$. Since $V_n\subseteq x^{-1}V_{i-1}\setminus x^{-1}V_{i-2}$, then the choice of $v_j$ is equivalent to a choice of nonzero vector in
    \[
    (x^{-1}V_{i-1}/V_{j-1})\setminus (x^{-1}V_{i-2}/V_{j-1}).
    \] 
    Furthermore, $\dim(x^{-1}V_{i-1}/V_{j-1}) = (n-1)+(i-1)-(j-1) = n-1-(j-i)$. Thus, we have an isomorphism
    \[
    \phi^{-1}(V_\bullet) \cong \mathbb{P}(x^{-1}V_{i-1}/V_{j-1})\setminus \mathbb{P}(x^{-1}V_{i-2}/V_{j-1})\cong \bC^{n-2-(j-i)},
    \]
    which is a vector space that varies continuously over $\Fl(\im(x))$. Hence, $\phi$ is a rank $n-2-(j-i)$ vector bundle.
\end{proof}

\begin{corollary}\label{cor:SetDiffPoincare}
    For $2\leq i<j\leq n$, letting $c=(i,j)$, then
    \[
    \Poin(K^{i,j}\setminus (K^{i-1,j}\cup K^{i,j+1});\sqrt{q}) = [n-1]_q!\,q^{a(c)+\ell(c)}.
    \]
\end{corollary}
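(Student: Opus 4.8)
The plan is to combine Lemma~\ref{lem:SetDiffBundle} with the machinery for Poincar\'e polynomials of vector bundles established in Section~\ref{sec:Background}. By Lemma~\ref{lem:SetDiffBundle}, the space $K^{i,j}\setminus(K^{i-1,j}\cup K^{i,j+1})$ is a vector bundle of rank $d = n-2-(j-i)$ over $\Fl(\im(x))$, and since $\im(x)$ is $(n-1)$-dimensional, $\Fl(\im(x))\cong \Fl(1^{n-1})$. The base $\Fl(1^{n-1})$ is compact, smooth, and admits an affine paving (its Schubert cell decomposition), so its Poincar\'e polynomial agrees with the generating function of Borel--Moore homology ranks, and by \eqref{eq:FlagPoincare} we have $\Poin(\Fl(1^{n-1});\sqrt{q}) = [n-1]_q!$.

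First I would invoke \eqref{eq:BundlePoincare}, which states $\Poin(E;q) = q^{2d}\Poin(X;q)$ for a rank $d$ vector bundle $E$ over $X$; equivalently, after substituting $q\mapsto\sqrt{q}$, $\Poin(E;\sqrt{q}) = q^{d}\Poin(X;\sqrt{q})$. Applying this with $X = \Fl(1^{n-1})$ and $d = n-2-(j-i)$ gives
\[
\Poin(K^{i,j}\setminus (K^{i-1,j}\cup K^{i,j+1});\sqrt{q}) = q^{\,n-2-(j-i)}\,[n-1]_q!.
\]
It then remains only to match the exponent: with $c = (i,j)$, the arm and leg statistics are $a(c) = i-2$ and $\ell(c) = n-j$, so $a(c)+\ell(c) = (i-2)+(n-j) = n-2-(j-i)$, which is exactly the rank $d$. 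Substituting this identity into the displayed formula yields the claim.

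I do not expect any genuine obstacle here; the corollary is essentially a bookkeeping consequence of the preceding lemma. The one point that requires a moment of care is the justification that $\Poin$ of the total space of the bundle behaves as in \eqref{eq:BundlePoincare} even though the total space is not compact: this is fine because \eqref{eq:BundlePoincare} is a statement about Borel--Moore homology (which is well-defined for any complex variety), and the total space is homotopy equivalent to its base in a way compatible with the Borel--Moore/cohomology dictionary recorded in Section~\ref{sec:Background}. I would state the proof in two or three sentences: cite Lemma~\ref{lem:SetDiffBundle}, apply \eqref{eq:BundlePoincare} together with \eqref{eq:FlagPoincare}, and observe $a(c)+\ell(c) = n-2-(j-i)$.
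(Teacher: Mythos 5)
Your proposal is correct and follows exactly the paper's own argument: cite Lemma~\ref{lem:SetDiffBundle}, apply \eqref{eq:BundlePoincare} together with \eqref{eq:FlagPoincare}, and note that $a(c)+\ell(c) = (i-2)+(n-j) = n-2-(j-i)$ equals the rank of the bundle. The extra remark about Borel--Moore homology and noncompactness is consistent with how the paper sets up $\Poin$ in Section~\ref{sec:Background}, so nothing further is needed.
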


\begin{proof}
    Since $K^{i,j}\setminus (K^{i-1,j}\cup K^{i,j+1})$ is a rank $(n-2-(j-i))$ vector bundle over $\Fl(1^{n-1})$ by Lemma~\ref{lem:SetDiffBundle}, then by \eqref{eq:BundlePoincare} and \eqref{eq:FlagPoincare}, we have
    \begin{equation}\label{eq:PoincareDifferencePiece}
    \Poin(K^{i,j}\setminus (K^{i-1,j}\cup K^{i,j+1});\sqrt{q}) = [n-1]_q!\, q^{n-2-(j-i)} = [n-1]_q!\, q^{a(c)+\ell(c)}.
    \end{equation}
\end{proof}

 \begin{proof}[Proof of Theorem~\ref{thm:Poincare}]
Recall that by Remark~\ref{rmk:RootPoset}, the collection of intersections of irreducible components forms a poset under reverse inclusion that is isomorphic to the type $A_{n-1}$ root lattice. Let $P$ be the subposet restricted to the union of all elements weakly above the $K^{i_t,j_t}$ for $t=1,\dots, m$, and take a linear extension of this poset $K^{c_1},\dots, K^{c_{m'}}$ (so that $K^{c_1}$ is a minimal element $K^{i_t,j_t}$ for some $t$, and $K^{c_{m'}}=K^{2,n}$ is the unique maximal element). Observe that the $c_t$ are exactly the cells lying weakly above and to the left of some $(i_r,j_r)$, which are the cells lying above the Dyck path $D$.

Then the chain of unions
\[
K^{c_{m'}} \subseteq (K^{c_{m'}-1}\cup K^{c_{m'}}) \subseteq \cdots \subseteq (K^{c_1}\cup \cdots \cup K^{c_{m'}-1}\cup K^{c_{m'}})
\]
is a filtration of 
\[
\bigcup_{t=1}^{m'} K^{c_t} = \bigcup_{t=1}^m K^{i_t,j_t}.
\]
by closed subvarieties. Suppose $c_t$ has coordinates $c_t = (x_t,y_t)$. Then $K^{c_t} = K^{x_t,y_t}$ is covered by $K^{x_t-1,y_t}$ and $K^{x_t,y_t+1}$ in the poset (which both must appear in the linear extension), and the successive differences of the pieces of the filtration are exactly the subspaces $K^{x_t,y_t}\setminus (K^{x_t-1,y_t}\cup K^{x_t,y_t+1})$ running over all $t$.

Thus, by \eqref{eq:FiltrationPoincare}, we may compute the Poincar\'e polynomial of the union as the sum
\begin{align*}
\Poin\left(\bigcup_{t=1}^m K^{i_t,j_t};\sqrt{q}\right) 
&= \sum_{t=1,\dots,m'} \Poin(K^{x_t,y_t}\setminus (K^{x_t-1,y_t}\cup K^{x_t,y_t+1});\sqrt{q}) \\
&= \sum_{t=1,\dots, m'} [n-1]_q! \, q^{n-2-(y_t-x_t)} \\
&= \sum_{c\in D} [n-1]_q! \, q^{a(c)+l(c)} 
\end{align*}
from which Theorem~\ref{thm:Poincare} follows. Here, we have applied Corollary \ref{cor:SetDiffPoincare} in the second line.
\end{proof}

\begin{comment}
\begin{proof}
    Per Proposition 4.2, 
    \[\mathrm{Poin}\left(\bigcup_{t=1,\dots,m} K^{i_{t},j_{t}}:q\right)= \sum_{{t}=1,..,m}\mathrm{Poin}(K^{i_{t} j_{t}})-\mathrm{Poin}\left(\bigcap_{t=1,\dots,m} K^{i_{t} j_{t}}\right).\]
    The form of the Poincare polynomial for $K^{ij}$ then allows us to write
    \[\sum_{{t}=1,\dots,m} ([n-1]_q![i_{t}-1]_q[n-j_{t}+1]_q)-\mathrm{Poin}(K^{i_P j_S})\]
    where $i_P$ is the smallest $i$ and $j_S$ is the largest $j$. We can further interpret the second term of this expression to find
    \begin{align*}
        &\sum_{{t}=1,\dots,m} ([n-1]_q![i_{t}-1]_q[n-j_{t}+1]_q)- ([n-1]_q![i_{P}-1]_q[n-j_{S}+1]_q)\\
        =&[n-1]_q!\sum_{{t}=1,\dots,m} ([i_{t}-1]_q[n-j_{t}+1]_q)- ([i_{P}-1]_q[n-j_{S}+1]_q).
    \end{align*}
\end{proof}
\end{comment}

\section{Singular cohomology of the irreducible components}\label{sec:SingularCohomology}

In this section, we specialize to the case $s=n-1$ and give a presentation of the integral singular cohomology ring of each irreducible component $K^i$ of $Y_{n,n-1}$. Recall the definition \eqref{eq:YnkDef} of $Y_{n,n-1}$ and that in this case, $\im(x)=\ker(x)$.

\begin{theorem}\label{thm:QuotientRing}
    We have
    \[
    H^*(K^i) \cong \bZ[x_1,\dots, x_n]/I_{n}^i
    \]
    where $I_n^i$ is the ideal generated by 
    \begin{itemize}
        \item $e_2(x_1,\dots, x_n),\dots, e_n(x_1,\dots, x_n)$
        \item $h_{j}(x_1,\dots, x_{i-1})$ for $j \geq n+1-i$
        \item $h_j(x_i,\dots, x_n)$ for $j\geq i-1$.
    \end{itemize}
\end{theorem}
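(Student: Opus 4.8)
The plan is to read off a generating set and a set of relations for $H^*(K^i)$ from the tautological bundles on $K^i$, obtaining a surjection from $\bZ[x_1,\dots,x_n]/I_n^i$, and then to prove it is an isomorphism by a rank count against the iterated Grassmannian bundle structure of Theorem~\ref{thm:GrassBundle}. On $K^i$ let $V_1\subseteq\cdots\subseteq V_n$ denote the tautological flag of subbundles of the trivial bundle $\bC^{2n-2}$ (recall $s=n-1$, so the ambient space has dimension $n-1+s=2n-2$), and set $x_j\coloneqq c_1\big((V_j/V_{j-1})^{\vee}\big)$, so that $x_1,\dots,x_n$ are the Chern roots of $V_n^{\vee}$; this defines a graded ring homomorphism $\bZ[x_1,\dots,x_n]\to H^*(K^i)$. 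Surjectivity follows from Theorem~\ref{thm:GrassBundle} and Leray--Hirsch applied to the bundle tower $K^i\to E_3\to E_2\to E_1=\Gr(i-1,n-1)$: the two complete-flag stages contribute $x_1,\dots,x_{i-1}$ and $x_i,\dots,x_n$ as Chern roots of $V_{i-1}^{\vee}$ and $(V_n/V_{i-1})^{\vee}$; the hyperplane class of the $\bP^{i-2}$-stage equals $-c_1(V_n)=e_1(x_1,\dots,x_n)$; and the tautological subbundle of the base $\Gr(i-1,n-1)$ pulls back with associated graded having Chern roots $x_1,\dots,x_{i-1}$, so its Chern classes are symmetric polynomials in those variables. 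Hence $H^*(K^i)$ is generated by $x_1,\dots,x_n$.

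Next I would check that the three families of relations hold; each is the vanishing above the rank of the Chern (or Segre) classes of a tautological quotient, coming from a short exact sequence. Since $\im(x)$ is a \emph{trivial} rank $n-1$ subbundle of $V_n$ and $V_n$ has rank $n$, dualizing the inclusion gives $c(V_n^{\vee})=c\big((V_n/\im x)^{\vee}\big)$, of degree $\le 1$; as $c(V_n^{\vee})=\sum_d e_d(x_1,\dots,x_n)$ this yields $e_d(x_1,\dots,x_n)=0$ for $d\ge 2$. Since $V_{i-1}$ is a subbundle of the trivial rank $n-1$ bundle $\im(x)$, we get $s(V_{i-1}^{\vee})=c\big((\im x/V_{i-1})^{\vee}\big)$, of degree $\le n-i$; as $s_d(V_{i-1}^{\vee})=(-1)^d h_d(x_1,\dots,x_{i-1})$ this yields $h_d(x_1,\dots,x_{i-1})=0$ for $d\ge n+1-i$. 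Finally, by Theorem~\ref{thm:ComponentDescrip} we have $V_n\subseteq x^{-1}V_{i-1}$, where $x^{-1}V_{i-1}$ is a rank $n+i-2$ subbundle of $\bC^{2n-2}$ fitting into $0\to\ker(x)\to x^{-1}V_{i-1}\xrightarrow{\,x\,}V_{i-1}\to 0$ with $\ker(x)$ trivial, so $c\big((x^{-1}V_{i-1})^{\vee}\big)=c(V_{i-1}^{\vee})$; the quotient $Q\coloneqq x^{-1}V_{i-1}/V_n$ then has rank $i-2$ and $c(Q^{\vee})=c(V_{i-1}^{\vee})/c(V_n^{\vee})=\prod_{m=i}^{n}(1+x_m)^{-1}=\sum_d(-1)^d h_d(x_i,\dots,x_n)$, giving $h_d(x_i,\dots,x_n)=0$ for $d\ge i-1$. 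These are exactly the generators of $I_n^i$, so the map descends to a surjection $\psi\colon A_n^i\coloneqq\bZ[x_1,\dots,x_n]/I_n^i\twoheadrightarrow H^*(K^i)$.

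To see $\psi$ is an isomorphism I would exhibit the set
\[
\mathcal{M}=\big\{\,x_1^{a_1}\cdots x_{i-1}^{a_{i-1}}\,e_1(x)^{c}\,x_i^{b_i}\cdots x_n^{b_n}\ :\ 0\le a_j\le n-1-j\ (j<i),\ 0\le c\le i-2,\ 0\le b_j\le n-j\ (j\ge i)\,\big\}
\]
and show it spans $A_n^i$ over $\bZ$ by straightening: the relations $h_d(x_i,\dots,x_n)=0$ bring $b_i,\dots,b_n$ into range, the relations $h_d(x_1,\dots,x_{i-1})=0$ do the same for $a_1,\dots,a_{i-1}$, and modulo $(e_2,\dots,e_n)$ the image of the ring of symmetric polynomials is generated by $e_1(x)$, so the remaining symmetric factor reduces to a power of $e_1(x)$; powers $e_1(x)^{c}$ with $c\ge i-1$ are then eliminated using the projective-bundle relation $\sum_{l=0}^{i-1}(-1)^l e_l(x_1,\dots,x_{i-1})\,e_1(x)^{\,i-1-l}=0$, which lies in $I_n^i$. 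A direct count gives $|\mathcal{M}|=(n-1)!\,(i-1)(n-i+1)$ with graded generating function $[n-1]_q!\,[i-1]_q[n-i+1]_q$, and this equals $\Poin(K^i;\sqrt q)$ as computed from the bundle tower of Theorem~\ref{thm:GrassBundle} via \eqref{eq:IteratedPoincare}. Since $K^i$ is an iterated Grassmannian bundle it admits an affine paving, so $H^*(K^i)$ is a free $\bZ$-module of this total rank; a spanning set of $A_n^i$ of exactly that size must map under $\psi$ to a $\bZ$-basis, which forces $\psi$ to be an isomorphism and $\mathcal{M}$ to be a basis of both rings.

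The technical heart is the straightening step: because $e_1(x)$ is \emph{not} imposed as a relation and the family $e_2,\dots,e_n$ entangles the two blocks $x_1,\dots,x_{i-1}$ and $x_i,\dots,x_n$, the two blocks cannot be reduced independently, and tracking the interaction with $e_1(x)$ requires care. One workable route is to factor through $R_{n,n-1}=H^*(Y_{n,n-1})=\bZ[x_1,\dots,x_n]/(e_2,\dots,e_n,x_1^{n-1},\dots,x_n^{n-1})$, using its known monomial basis and the fact that $A_n^i$ is a further quotient of it (every $x_j^{n-1}$ already lies in $I_n^i$), and then straighten modulo the two $h$-families. An alternative that avoids an explicit basis is to rebuild the presentation $A_n^i$ stage by stage from the standard cohomology presentations of flag and projective bundles along the tower of Theorem~\ref{thm:GrassBundle}, which yields the isomorphism on the nose; or to prove that $A_n^i$ is Artinian Gorenstein with one-dimensional socle in the top degree $2\dim_{\bC}K^i$ and invoke that a surjection between such graded rings which is onto in top degree is an isomorphism.
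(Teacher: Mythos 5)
Your first half—the surjection $\bZ[x_1,\dots,x_n]/I_n^i\twoheadrightarrow H^*(K^i)$—is essentially the paper's Lemma~\ref{lem:CohMap}: the same tautological classes (up to your harmless dualizing sign convention), surjectivity from the iterated Grassmannian bundle structure of Theorem~\ref{thm:GrassBundle}, the $e_d$-relations from the trivial subbundle $\ker(x)\subseteq \widetilde{V}_n$, the $h_d(x_1,\dots,x_{i-1})$-relations from $\widetilde{V}_{i-1}\subseteq\ker(x)$ via Segre classes, and the $h_d(x_i,\dots,x_n)$-relations from $V_n\subseteq x^{-1}V_{i-1}$ together with the exact sequence $0\to\ker(x)\to \widetilde{x^{-1}V}_{i-1}\to\widetilde{V}_{i-1}\to 0$. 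That part is correct and complete.

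The gap is in the injectivity half. Your overall logic is fine (a spanning set of $A_n^i=\bZ[x_1,\dots,x_n]/I_n^i$ of cardinality equal to the free rank of $H^*(K^i)$ forces the surjection to be an isomorphism, and your candidate set $\mathcal{M}$ has the right cardinality and graded count $[n-1]_q!\,[i-1]_q[n-i+1]_q$), but the crucial step—that $\mathcal{M}$ actually spans $A_n^i$—is exactly the point you flag as the ``technical heart'' and never carry out. The straightening is genuinely delicate because $e_1$ is not a relation and $e_2,\dots,e_n$ couple the two blocks of variables, and none of your three fallback routes is executed: the factorization through $R_{n,n-1}$ rests on the unproved (though true, via $h_d(S\setminus T)=\sum_j(-1)^j e_j(T)h_{d-j}(S)$) claim that each $x_j^{n-1}\in I_n^i$ and still leaves the reduction modulo the two $h$-families to do; rebuilding the presentation stage by stage along the tower requires showing the resulting ideal equals $I_n^i$, which is the same Hilbert-series problem in disguise; and the Gorenstein-socle argument requires proving $A_n^i$ is Gorenstein with one-dimensional socle before knowing the isomorphism (and is awkward over $\bZ$). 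The paper avoids all of this by observing that $I_n^i$ is the ideal $I_{(i-1,n+1-i),n-1}$ studied by Rhoades~\cite{RhoadesSubspace}, whose Lemma~5.3 gives that the quotient is a free $\bZ$-module with basis indexed by the ordered set partitions $\mathcal{OP}_{(i-1,n+1-i),n-1}$, of cardinality $(i-1)(n-i+1)(n-1)!$, matching $\mathrm{rk}\,H^*(K^i)$ from Proposition~\ref{prop:Poincare-ij} (this is Lemma~\ref{lem:Ranks}). To complete your argument you should either cite that result or supply a full straightening proof that $\mathcal{M}$ spans; as written, the upper bound on the rank of $A_n^i$ is missing.
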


\begin{lemma}\label{lem:CohMap}
    There is a surjective map 
    \[
    \mathbb{Z}[x_1,\dots, x_n]/I_n^i \twoheadrightarrow H^*(K^i).
    \]
\end{lemma}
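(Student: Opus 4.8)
The plan is to realise the map geometrically from the tautological bundles on $K^i$ and then read off both well-definedness and surjectivity from the bundle tower of Theorem~\ref{thm:GrassBundle}. Write $\mathcal{V}_1\subseteq\cdots\subseteq\mathcal{V}_n$ for the tautological flag of subbundles of the trivial bundle $\underline{\bC^{n+s-1}}$, restricted from $\Fl(1^n,s-1)$ to $K^i$, and set $x_j \coloneqq c_1(\mathcal{V}_j/\mathcal{V}_{j-1})\in H^2(K^i)$ for $1\le j\le n$ (these agree with the Chern roots appearing in the tower built in the proof of Lemma~\ref{lem:IteratedGrass}). Sending each indeterminate $x_j$ to this class defines a graded ring homomorphism $\varphi\colon \bZ[x_1,\dots,x_n]\to H^*(K^i)$, and the two things to establish are that $\varphi(I_n^i)=0$ and that $\varphi$ is onto.

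That $\varphi$ kills $I_n^i$ should follow formally from the chain of bundle inclusions $\mathcal{V}_{i-1}\subseteq\underline{\im(x)}\subseteq\mathcal{V}_n\subseteq x^{-1}\mathcal{V}_{i-1}$ supplied by Theorem~\ref{thm:ComponentDescrip}, together with the fact that in the case $s=n-1$ the subbundle $\underline{\im(x)}=\underline{\ker(x)}$ is \emph{trivial} of rank $n-1$. Concretely: the exact sequence $0\to\underline{\im(x)}\to\mathcal{V}_n\to\mathcal{V}_n/\underline{\im(x)}\to 0$, with the quotient of rank $1$, gives $c(\mathcal{V}_n)=c(\mathcal{V}_n/\underline{\im(x)})$, whose part in degree $\ge 2$ vanishes; since $c(\mathcal{V}_n)=\prod_{j=1}^n(1+x_j)=\sum_d e_d(x_1,\dots,x_n)$, this is exactly $e_2(x_1,\dots,x_n)=\cdots=e_n(x_1,\dots,x_n)=0$. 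From $\mathcal{V}_{i-1}\subseteq\underline{\im(x)}$ we get that $c(\underline{\im(x)}/\mathcal{V}_{i-1})$ equals the total Segre class $\sum_d(-1)^d h_d(x_1,\dots,x_{i-1})$, which vanishes in degrees $>n-i$, i.e.\ $h_d(x_1,\dots,x_{i-1})=0$ for $d\ge n+1-i$. And from $0\to\underline{\ker(x)}\to x^{-1}\mathcal{V}_{i-1}\xrightarrow{x}\mathcal{V}_{i-1}\to 0$ (using $\ker(x)\subseteq x^{-1}\mathcal{V}_{i-1}$ and surjectivity of $x$ onto $\mathcal{V}_{i-1}\subseteq\im(x)$) we get $c(x^{-1}\mathcal{V}_{i-1})=\prod_{j<i}(1+x_j)$, so $c(x^{-1}\mathcal{V}_{i-1}/\mathcal{V}_n)=\prod_{j<i}(1+x_j)\big/\prod_{j\le n}(1+x_j)=\sum_d(-1)^d h_d(x_i,\dots,x_n)$; since this bundle has rank $i-2$ it vanishes in degrees $>i-2$, i.e.\ $h_d(x_i,\dots,x_n)=0$ for $d\ge i-1$. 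Hence $\varphi$ descends to $\bZ[x_1,\dots,x_n]/I_n^i$.

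For surjectivity I would climb the tower $E_4\to E_3\to E_2\to E_1$ of Theorem~\ref{thm:GrassBundle} using the standard presentations of the cohomology of Grassmannian, flag, and projective bundles (equivalently, Leray--Hirsch). Each fiber ($\Gr(i-1,n-1)$, $\Fl(1^{i-1})$, $\bP^{i-2}$, $\Fl(1^{n-i+1})$) has torsion-free cohomology spanned by monomials in Chern classes of its tautological bundles, and those classes extend over the total space, so inductively $H^*(K^i)$ is generated as a $\bZ$-algebra by: the Chern classes $c_d(\mathcal{V}_{i-1})$ on the Grassmannian base $E_1$; the Chern roots $x_1,\dots,x_{i-1}$ of $\mathcal{V}_{i-1}$ from the flag bundle $E_2$; the relative hyperplane class $\zeta$ of the projective bundle $E_3=\bP(x^{-1}\mathcal{V}_{i-1}/\underline{\im(x)})$; and the Chern roots $x_i,\dots,x_n$ of $\mathcal{V}_n/\mathcal{V}_{i-1}$ from the flag bundle $E_4$. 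Now $c_d(\mathcal{V}_{i-1})=e_d(x_1,\dots,x_{i-1})$ is in the image of $\varphi$, and the tautological sub-line bundle of $E_3$ is $\mathcal{V}_n/\underline{\im(x)}$, so $\zeta=-c_1(\mathcal{V}_n/\underline{\im(x)})=-c_1(\mathcal{V}_n)=-(x_1+\cdots+x_n)$ by triviality of $\underline{\im(x)}$. Thus every generator lies in the image of $\varphi$, and $\varphi$ is onto.

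The step I expect to be the real obstacle is the surjectivity argument: one must set up the inductive bundle computation cleanly along the tower (verifying at each stage that the fiber cohomology is free and spanned by restrictions of global Chern classes — routine for Grassmannian, flag, and projective bundles) and, crucially, recognise the one genuinely new generator $\zeta$ coming from the projective-bundle stage as a polynomial in $x_1,\dots,x_n$. It is exactly here that the triviality of the subbundle $\underline{\im(x)}$ — a feature special to this family of components — is essential; everything else reduces to bookkeeping with the four short exact sequences above.
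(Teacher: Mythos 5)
Your proposal is correct and follows essentially the same route as the paper: the same map $\varphi$ sending $x_j$ to $c_1(\widetilde{V}_j/\widetilde{V}_{j-1})$, the same three Chern/Segre class computations from the inclusions $\widetilde{V}_{i-1}\subseteq\ker(x)\subseteq\widetilde{V}_n\subseteq \widetilde{x^{-1}V}_{i-1}$ (your third computation just packages the paper's two short exact sequences slightly differently), and surjectivity from the iterated bundle structure of Theorem~\ref{thm:GrassBundle}. Your Leray--Hirsch discussion, in particular identifying the projective-bundle class as $-c_1(\widetilde{V}_n/\ker(x))=-(x_1+\cdots+x_n)$ via triviality of $\ker(x)$, simply makes explicit the step the paper dispatches in one sentence.
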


\begin{proof}
    First, we have a map
    \[
    \varphi:\mathbb{Z}[x_1,\dots, x_n] \to H^*(K^i)
    \]
    given by sending $x_j$ to the first Chern class of the tautological quotient line bundle, $c_1(\widetilde{V}_j/\widetilde{V}_{j-1})$ where $\widetilde{V}_j$ is the rank $j$ tautological bundle on $K^i$ whose fiber over $V_\bullet$ is $V_j$. By a slight abuse of notation, we will write $x_j = c_1(\widetilde{V}_j/\widetilde{V}_{j-1})$ in our calculations below. By the construction of $K^i$ as an iterated Grassmannian bundle in Theorem~\ref{thm:GrassBundle}, then $H^*(K^i)$ is generated by the Chern roots $x_1,\dots, x_n$ of $\widetilde{V}_n$. Thus, the map $\varphi$ is surjective.

    Second, we claim that $I_n^i \subseteq \ker(\varphi)$. Indeed, since $\ker(x) \subseteq V_n$. for all $V_\bullet \in Y_{n,n-1}$, then the rank $n-1$ trivial bundle $\ker(x)$ is a sub-bundle of $\widetilde{V}_n$. Therefore, by multiplicativity,
    \[
    c(\widetilde{V}_n) = c(\ker(x)) c(\widetilde{V}_n/\ker(x)) = c(\widetilde{V}_n/\ker(x)).
    \]
    Since $\widetilde{V}_n/\ker(x)$ is a rank $1$ vector bundle, then $c_j(\widetilde{V}_n)=0$ for $j\geq 2$. Since $c_j(\widetilde{V}_n) = e_j(x_1,\dots,x_n)$, then $e_j(x_1,\dots, x_n) \in \ker(\varphi)$ for $j\geq 2$. 
    
    By the definition of $K^i$, we have $V_{i-1}\subseteq \ker(x)$ for all $V_\bullet\in K^i$. Then $\widetilde{V}_{i-1}$ is a sub-bundle of the trivial bundle $\ker(x)$, so we have
    \[
    1=c(\ker(x)) = c(\widetilde{V}_{i-1}) c(\ker(x)/\widetilde{V}_{i-1}).
    \]
    Therefore,
    \[
    s(\widetilde{V}_{i-1}) = c(\ker(x)/\widetilde{V}_{i-1})
    \]
    but $\ker(x)/\widetilde{V}_{i-1}$ has rank $n-i$, so 
    \[
    h_j(x_1,\dots, x_i) = s_j(\widetilde{V}_{i-1}) = c_j(\ker(x)/\widetilde{V}_{i-1}) = 0.
    \]
    Thus, $h_j(x_1,\dots, x_i)\in \ker(\varphi)$ for $j > n-i$. \\

    Third, for each $V_\bullet\in K^i$, since $V_{i-1}\subseteq \ker(x)$, then  $x^{-1}V_{i-1}$ has dimension $(n-1) + (i-1)$. Let $\widetilde{x^{-1}V_{i-1}}$ be the corresponding tautological vector bundle of rank $(n-1) + (i-1)$. Observe that by the definition of $K^i$, we have a short exact sequence
    \[
    0 \to \widetilde{V}_n/\widetilde{V}_{i-1} \to \widetilde{x^{-1}V}_{i-1}/\widetilde{V}_{i-1} \to  \widetilde{x^{-1}V}_{i-1}/\widetilde{V}_n \to 0.
    \]
    Thus, 
    \begin{equation}\label{eq:QuotientChern}
    c(\widetilde{x^{-1}V}_{i-1}/\widetilde{V}_n) = \frac{c(\widetilde{x^{-1}V}_{i-1}/\widetilde{V}_{i-1})}{c(\widetilde{V}_n/\widetilde{V}_{i-1})}
    \end{equation}
    Furthermore, we have another short exact sequence
    \[
    0 \to \ker(x) \to \widetilde{x^{-1} V}_{i-1} \xrightarrow[]{x} \widetilde{V}_{i-1}\to 0
    \]
    where the second map is induced by $x$. Thus,
    \[
    c(\widetilde{x^{-1} V}_{i-1}/\widetilde{V}_{i-1}) = c(\widetilde{x^{-1} V}_{i-1})/c(\widetilde{V}_{i-1}) = c(\ker(x)) = 1
    \]
    Combining this with \eqref{eq:QuotientChern}, since the Chern roots of $\widetilde{V}_n/\widetilde{V}_{i-1}$ are $x_i,\dots, x_n$ we have
    \[
    c(\widetilde{x^{-1}V}_{i-1}/\widetilde{V}_n) = \frac{1}{c(\widetilde{V}_n/\widetilde{V}_{i-1})} = s(\widetilde{V}_n/\widetilde{V}_{i-1}) = \sum_j (-1)^j h_j(x_i,\dots, x_n).
    \]
    Since $\widetilde{x^{-1}V}_{i-1}/\widetilde{V}_n$ has rank $((n-1)+(i-1))-n = i-2$, then $h_j(x_i,\dots, x_n) = 0$ for all $j\geq i-1$. Therefore, $h_j(x_i,\dots, x_n)\in I_n^i$ for all $j\geq i-1$, and we conclude that $I_n^i\subseteq \ker(\varphi)$ and we get our desired map.
\end{proof}

To finish the proof of Theorem~\ref{thm:QuotientRing}, by Lemma~\ref{lem:CohMap} we must simply check that the cohomology ring and the quotient ring are both free of the same total rank. 

To do this, we invoke results of Rhoades~\cite{RhoadesSubspace} who studied quotient rings in the context of \emph{spanning subspace arrangements} in terms of ordered set partitions. Given $k\leq n$ two positive integers, $\mathcal{OP}_{n,k}$ denotes the set of \textbf{ordered set partitions} of $[n]$ with $k$ blocks, $\sigma = (B_1|B_2|\cdots |B_k)$ where $[n] = B_1\sqcup B_2\sqcup \cdots \sqcup B_k$. 

\begin{lemma}\label{lem:Ranks}
    The two rings $H^*(K^i)$ and $\mathbb{Z}[x_1,\dots, x_n]/I_n^i$ are free $\mathbb{Z}$-modules of the same rank.
\end{lemma}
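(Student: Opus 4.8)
The plan is to show both sides are free $\mathbb{Z}$-modules and that their ranks agree with the count of a suitable combinatorial set. For the cohomology side, Theorem~\ref{thm:GrassBundle} presents $K^i$ as an iterated Grassmannian bundle; since the cohomology of a Grassmannian is free and concentrated in even degrees, the Leray--Hirsch theorem (or iterated application of the projective/Grassmannian bundle formula) gives that $H^*(K^i)$ is a free $\mathbb{Z}$-module whose rank is the product of the ranks of $H^*$ of the fibers. Using \eqref{eq:IteratedPoincare} together with \eqref{eq:GrassPoincare} and \eqref{eq:FlagPoincare}, this rank is obtained by setting $q=1$ in $\Poin(K^i;\sqrt q)$; concretely, from the fiber type $\Gr(i-1,n-1),\Fl(1^{i-1}),\mathbb{P}^{s+i-n-1},\Fl(1^{n-i+1})$ specialized to $s=n-1$, the rank is $\binom{n-1}{i-1}(i-1)!\,(n-i+1)! = (n-1)!\,(n-i+1)$.

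For the quotient-ring side, the strategy is to realize $\mathbb{Z}[x_1,\dots,x_n]/I_n^i$ as (an integral form of) a ring studied by Rhoades~\cite{RhoadesSubspace} in connection with spanning subspace arrangements and ordered set partitions. The plan is to identify the three families of generators of $I_n^i$ — the higher $e_j(x_1,\dots,x_n)$, the ``high'' $h_j(x_1,\dots,x_{i-1})$, and the ``high'' $h_j(x_i,\dots,x_n)$ — with the defining relations of Rhoades's quotient attached to a two-block (or product-of-flag) configuration, so that a monomial/Gröbner basis is available. From that one reads off a $\mathbb{Z}$-basis of monomials, showing freeness, and a count of the basis in terms of ordered set partitions in $\mathcal{OP}_{n,k}$ (for the appropriate $k$), or directly a product formula. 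The key is that this combinatorial count must again equal $(n-1)!\,(n-i+1)$: I would verify this either by a direct bijection (e.g.\ choosing a size-$(i-1)$ subset, ordering it, and ordering its complement, with one extra degree of freedom reflected in the $\mathbb{P}^{s+i-n-1}$ factor) or by specializing Rhoades's graded Hilbert series at $q=1$.

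The main obstacle is the quotient side: one must pin down precisely which of Rhoades's results applies and over which base ring, since \cite{RhoadesSubspace} works rationally or over a field while Theorem~\ref{thm:QuotientRing} asserts an integral isomorphism. So the real work is producing an explicit monomial basis (standard monomials for a Gröbner basis of $I_n^i$ with respect to a suitable term order, likely lex with $x_1 > \cdots > x_n$) directly from the generating set listed, proving it spans over $\mathbb{Z}$ and is $\mathbb{Z}$-linearly independent, and then counting it. Once that monomial basis is in hand, freeness is automatic and the rank is a combinatorial sum that I would match to $(n-1)!\,(n-i+1)$, completing the proof of the Lemma. A secondary subtlety is handling the edge cases $i=2$ and $i=n$, where one of the two $h$-families becomes vacuous or forces extra vanishing; these should be checked separately but are expected to be straightforward.
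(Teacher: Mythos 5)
Your cohomology-side computation contains a concrete error: you list the fiber type $\Gr(i-1,n-1)$, $\Fl(1^{i-1})$, $\bP^{s+i-n-1}$, $\Fl(1^{n-i+1})$ but then drop the projective-space factor from the product of ranks. At $s=n-1$ that fiber is $\bP^{i-2}$, contributing a factor of $i-1$, so the rank of $H^*(K^i)$ is $\binom{n-1}{i-1}(i-1)!\,(i-1)\,(n-i+1)! = (n-1)!\,(i-1)(n-i+1)$, not $(n-1)!\,(n-i+1)$ as you assert (equivalently, set $q=1$ in $\Poin(K^i;\sqrt q)=[n-1]_q!\,[i-1]_q[n-i+1]_q$ from Proposition~\ref{prop:Poincare-ij}). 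Since you also take $(n-1)!\,(n-i+1)$ as the target for the quotient-ring count, the number you would try to match on the algebraic side is wrong by this factor of $i-1$; the correct count of $\mathcal{OP}_{(i-1,n+1-i),n-1}$, the ordered set partitions in which $1,\dots,i-1$ lie in distinct blocks and $i,\dots,n$ lie in distinct blocks, is $(i-1)(n-i+1)(n-1)!$, the factor $(i-1)(n-i+1)$ being the choice of the unique doubleton block.

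The more substantial gap is that the quotient-ring side is never actually carried out: you correctly identify that one must connect $I_n^i$ to Rhoades's ideals and worry about the base ring, but you then defer the entire argument (``the real work is producing an explicit monomial basis\dots'') without producing it. The paper's proof resolves exactly this point by identifying $I_n^i = I_{(i-1,n+1-i),n-1}$ in Rhoades's notation and invoking \cite[Lemma 5.3]{RhoadesSubspace}, which gives freeness over $\bZ$ together with an explicit basis indexed by $\mathcal{OP}_{(i-1,n+1-i),n-1}$, whence the rank count above. If you prefer to avoid the citation, you would indeed need to exhibit a Gr\"obner or monomial basis for $I_n^i$ over $\bZ$ and count it, handling the boundary cases $i=2$ and $i=n$; as written, your proposal states this as a plan rather than a proof, so the lemma is not established. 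The cohomology-side route you take (Leray--Hirsch on the iterated Grassmannian bundle) is essentially the paper's, modulo the dropped factor.
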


\begin{proof}
    By Proposition~\ref{prop:Poincare-ij}, $H^*(K^i)$ is free and the associated Poincar\'e polynomial is
    \[
    \Poin(K^i;q) = [n-1]_q! [i-1]_q[n-i+1]_q.
    \]
    On the other hand, the quotient ring $\mathbb{Z}[x_1,\dots, x_n]/I_n^i$ has been studied by Rhoades in \cite{RhoadesSubspace}, where he showed it is free and found an explicit basis of the ring. In Rhoades' notation, $I_n^i = I_{(i-1,n+1-i),n-1}$. By \cite[Lemma 5.3]{RhoadesSubspace}, the quotient ring $\mathbb{Z}[x_1,\dots, x_n]/I_n^i$ has a basis indexed by the following collection of ordered set partitions
    \[
    \mathcal{OP}_{(i-1,n+1-i),n-1} = \left\{\sigma \in \mathcal{OP}_{n,n-1} \Bigm\vert 
    \begin{array}{l} 1,\dots, i-1\text{ are in distinct blocks of }\sigma\\
    i,\dots, n\text{ are in distinct blocks of }\sigma
    \end{array}\right\}.
    \]
    For $\sigma\in \mathcal{OP}_{(i-1,n+1-i),n-1}$, observe that all blocks $B_j$ of $\sigma$ must be singletons except for one block $B_m$ which has size 2. Thus, $B_m$ must contain one element from $\{1,\dots, i-1\}$ and one element from $i,\dots,n$. Thus,
    \[
    |\mathcal{OP}_{(i-1,n+1-i),n-1}| = (i-1)(n-i+1)(n-1)!,
    \]
    where $(i-1)(n-i+1)$ represents a choice of size 2 block, and $(n-1)!$ counts all permutations of the size 2 block with the $n-2$ singleton blocks. Therefore, 
    \[
    \mathrm{rk}(\mathbb{Z}[x_1,\dots, x_n]/I_n^i) = |\mathcal{OP}_{(i-1,n+1-i),n-1}| = (i-1)(n-i+1)(n-1)! = \mathrm{rk}(H^*(K^i)).
    \]
\end{proof}

\begin{proof}[Proof of Theorem~\ref{thm:QuotientRing}]
    By Lemma~\ref{lem:CohMap}, we have a surjective map $\mathbb{Z}[x_1,\dots, x_n]/I_n^i \twoheadrightarrow H^*(K^i)$. By Lemma~\ref{lem:Ranks}, both rings are free $\mathbb{Z}$-modules of the same total finite rank, hence the map must be an isomorphism.
\end{proof}

\section{Further directions}\label{sec:Future}

In this article, we focused on the case $\lambda=(1^{n-1})$ of the $\Delta$-Springer fiber $Y_{n,\lambda,s}$. When $\lambda$ has two columns, we expect that the irreducible components will not all be smooth. Indeed, this is already known to be true for the usual Springer fibers (when the size of $\lambda$ is equal to $n$). However, we expect that it may be a reasonable problem to characterize the smooth irreducible components of $Y_{n,\lambda,s}$ in all cases when $\lambda$ has two columns by extending techniques of Fresse and Melnikov~\cite{fresse-melnikov}.

\begin{problem}
    Generalize the characterization of smooth irreducible components of two-column Springer fibers given in \cite{fresse-melnikov} to the setting of $Y_{n,\lambda,s}$ for $\lambda_1\leq 2$.
\end{problem}

We also believe it would be interesting to generalize the cohomology presentation given in Section~\ref{sec:SingularCohomology} in various ways.

\begin{problem}
    Extend Theorem~\ref{thm:QuotientRing} to a presentation of each irreducible component $K^i$ of $Y_{n,(1^{n-1}),s}$ for general $s\geq n-1$ and to each irreducible component intersection $K^{i,j}$.
\end{problem}

\begin{problem}
    There is a $1$-parameter torus $\bC^\times$ that acts on $Y_{n,(1^{n-1}),s}$ and thus on each irreducible component $K^i$. Generalize the presentation in Theorem~\ref{thm:QuotientRing} to equivariant cohomology $H^*_T(K^i)$.
\end{problem}

\printbibliography

\end{document}